\newtheorem{theorem}{Theorem}
\newtheorem{claim}{Claim}
\newtheorem{lemma}{Lemma}
\newtheorem{corollary}{Corollary}
\newtheorem{definition}{Definition}
\newtheorem{remark}{Remark}
\newcommand{\floor}[1]{\left\lfloor{#1}\right\rfloor}
\DeclareMathOperator{\ex}{ex}
\def\h{\mathcal H}
\def\P{\mathcal P}
\def\S{\mathcal S}
\def\K{\mathcal K}
\def\BC{\mathcal {BC}}
\def\BP{\mathcal {BP}}
\def\F{\mathcal F}
\newcommand{\abs}[1]{\left\lvert{#1}\right\rvert}
\def\path{semi-path}
\begin{document}
\title{The Structure of Hypergraphs without long Berge cycles}

\author[1,2]{Ervin Gy\H{o}ri} 
\author[3]{Nathan Lemons}
\author[1,2]{Nika Salia}
\author[2,4]{Oscar Zamora} 

\affil[1]{Alfr\'ed R\'enyi Institute of Mathematics, Hungarian Academy of Sciences. \newline
 \texttt{gyori.ervin@renyi.mta.hu, nika@renyi.hu}}
\affil[2]{Central European University, Budapest.     }
\affil[3] {Theoretical Division, Los Alamos National Laboratory}
 \affil[4]{Universidad de Costa Rica, San Jos\'e. \par
 \texttt{oscar.zamoraluna@ucr.ac.cr}}

\maketitle

\begin{abstract}
We study the structure of $r$-uniform hypergraphs containing no Berge cycles of length at least $k$ for $k \leq r$, and determine that such hypergraphs have some special substructure. In particular we determine the extremal number of such hypergraphs, giving an affirmative answer to the conjectured value when $k=r$ and giving a a simple solution to a recent result of Kostochka-Luo when $k < r$. 
\end{abstract}

\section{Introduction}
 
In 1959 Erd\H{o}s and Gallai proved the following results on the Tur\'an number of paths and families of long cycles.

\begin{theorem}[Erd\H{o}s, Gallai~\cite{Er-Ga}]\label{EG1} Let $n \geq k \geq 1$. If $G$ is an $n$-vertex graph that does not contain a path of length $k$, then $e(G) \leq \frac{(k - 1)n}{2}$.
\end{theorem}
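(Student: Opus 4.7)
The plan is to prove the bound by induction on $n$. The base case $n \leq k$ is immediate, since then $\binom{n}{2} \leq \frac{(k-1)n}{2}$. For $n > k$, I would apply induction to each connected component separately (the per-component bounds sum correctly) to reduce to the case that $G$ is connected. Then I split on whether $G$ has a vertex of degree at most $(k-1)/2$.

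If some $v \in V(G)$ has $\deg(v) \leq (k-1)/2$, delete $v$ and apply the induction hypothesis to $G - v$, which still contains no path of length $k$:
\[
e(G) \leq e(G-v) + \deg(v) \leq \frac{(k-1)(n-1)}{2} + \frac{k-1}{2} = \frac{(k-1)n}{2}.
\]

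The remaining case is $\delta(G) \geq k/2$, where I plan to derive a contradiction by actually constructing a path of length $k$. Take a longest path $P = v_0 v_1 \cdots v_\ell$ and suppose for contradiction $\ell < k$; by maximality all neighbors of $v_0$ and $v_\ell$ lie on $P$. Define $S = \{i : v_0 v_i \in E\}$ and $T = \{i : v_{i-1} v_\ell \in E\}$, both subsets of $\{1, \ldots, \ell\}$. Then $|S| + |T| = \deg(v_0) + \deg(v_\ell) \geq k > \ell$, so $S \cap T \neq \emptyset$. For any $i \in S \cap T$, the sequence $v_0 v_1 \cdots v_{i-1} v_\ell v_{\ell-1} \cdots v_i v_0$ is a cycle on $V(P)$ of length $\ell + 1$. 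Since $G$ is connected and $n > k > \ell + 1$, some vertex $u \notin V(P)$ has a neighbor on this cycle; deleting one cycle-edge at that neighbor and prepending $u$ yields a path of length $\ell + 1$, contradicting the maximality of $P$.

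I expect this last paragraph to be the main technical step: the Pósa-style rotation argument that parlays the two endpoint degrees into a cycle of length $\ell + 1$, followed by the use of connectivity together with $n > k$ to extend that cycle into a longer path. The indexing of $S$ and $T$ must be aligned so that the chords $v_0 v_i$ and $v_{i-1} v_\ell$ actually glue into a genuine cycle, and the existence of a vertex outside the cycle depends essentially on having reduced to the connected case with $n > k$; without both ingredients, the contradiction cannot be produced.
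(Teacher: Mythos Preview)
Your argument is correct. One tiny slip: in the last paragraph you write ``$n > k > \ell + 1$'', but from $\ell < k$ you only get $\ell + 1 \le k$, not $\ell + 1 < k$; since $n > k \ge \ell + 1$ still gives $|V(P)| = \ell + 1 < n$, the existence of a vertex $u \notin V(P)$ and the rest of the argument are unaffected.

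Your route, however, is not the one the paper alludes to. The paper does not prove Theorem~\ref{EG1} at all; it merely records that in~\cite{Er-Ga} it is obtained as a corollary of the cycle result, Theorem~\ref{EG2}. That deduction goes by breaking $G$ into connected components: a component on at most $k$ vertices has at most $\binom{k}{2} \le \frac{(k-1)k}{2}$ edges, while a connected component on more than $k$ vertices with no path of length $k$ can have no cycle of length at least $k$ (a cycle of length $\ell \ge k$ either already contains a $P_k$ or, using connectivity, can be extended off the cycle to one), so Theorem~\ref{EG2} bounds its edges by $\frac{(k-1)(n_i-1)}{2}$; summing gives the claim. Your proof is genuinely different: it is self-contained and avoids Theorem~\ref{EG2} entirely, trading the appeal to the (harder) cycle theorem for a direct minimum-degree deletion plus the classical P\'osa rotation. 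The paper's route is shorter once Theorem~\ref{EG2} is in hand; yours is more elementary and stands on its own.
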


\begin{theorem}[Erd\H{o}s, Gallai~\cite{Er-Ga}]\label{EG2} Let $n \geq k \geq 3$. If $G$ is an $n$-vertex graph that does not contain a cycle of length at least $k$, then $e(G) \leq \frac{(k - 1)(n - 1)}{2}$.
\end{theorem}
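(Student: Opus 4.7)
The plan is to prove the theorem by strong induction on the number of vertices $n$, first reducing to the case where $G$ is $2$-connected and then exploiting the interplay between minimum degree and circumference. The base case $n \le k-1$ is immediate, since then $e(G) \le \binom{n}{2} \le \frac{(k-1)(n-1)}{2}$. For the inductive step, if $G$ is disconnected with components of sizes $n_1, \dots, n_t$, each component contains no cycle of length at least $k$, so by induction $e(G) = \sum_i e(G_i) \le \sum_i \frac{(k-1)(n_i-1)}{2} \le \frac{(k-1)(n-1)}{2}$, using $\sum_i (n_i-1) = n - t \le n-1$. Similarly, if $G$ is connected but has a cut-vertex $v$, write $G = G_1 \cup G_2$ with $V(G_1) \cap V(G_2) = \{v\}$ and $|V(G_i)| = n_i$, so that $n_1 + n_2 = n+1$; applying induction to each of $G_1$ and $G_2$ yields the bound.

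The remaining case is $G$ $2$-connected with $n \ge k$. The key ingredient is a Dirac-type lemma: a $2$-connected graph with minimum degree $\delta$ contains a cycle of length at least $\min(n, 2\delta)$. Granted this, if $\delta(G) \ge \lceil k/2 \rceil$ then $G$ would contain a cycle of length at least $\min(n,k) = k$, contradicting the hypothesis. Hence some vertex $v$ satisfies $d(v) \le \lfloor (k-1)/2 \rfloor$. Deleting $v$ gives $G'$ on $n-1$ vertices with no cycle of length at least $k$, and the induction hypothesis yields
\[
e(G) \le e(G') + \lfloor (k-1)/2 \rfloor \le \frac{(k-1)(n-2)}{2} + \frac{k-1}{2} = \frac{(k-1)(n-1)}{2},
\]
which closes the induction.

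The main obstacle is the Dirac-type lemma itself. Its standard proof proceeds by a rotation-extension argument on a longest path $P = v_0 v_1 \dots v_p$: the maximality of $P$ forces $N(v_0), N(v_p) \subseteq V(P)$, and $2$-connectedness allows one to find an index $i$ with $v_0 v_i, v_{i-1} v_p \in E(G)$ (equivalently, to exhibit two internally disjoint $v_0$--$v_p$ paths), producing a cycle on $V(P)$. A degree count then shows that this cycle has length at least $2\delta$ unless it already spans $V(G)$, in which case its length is $n$. Writing this step cleanly is the most delicate part of the argument; once it is in hand, the induction above goes through with no additional subtleties, and the extremal constructions (disjoint copies of $K_{k-1}$ glued at a common vertex) show tightness.
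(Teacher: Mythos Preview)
The paper does not prove this theorem; it is quoted in the introduction as a classical background result of Erd\H{o}s and Gallai, with no argument supplied. So there is no ``paper's own proof'' to compare against.

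Your overall strategy is the standard one and is correct: reduce by induction to the $2$-connected case via the block decomposition, then invoke Dirac's circumference bound (every $2$-connected graph with minimum degree $\delta$ has a cycle of length at least $\min(n,2\delta)$) to locate a vertex of degree at most $\lfloor (k-1)/2\rfloor$, delete it, and apply the inductive hypothesis. The arithmetic in each step checks out.

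The one genuine inaccuracy is in your sketch of the Dirac lemma. You write that $2$-connectedness ``allows one to find an index $i$ with $v_0v_i,\,v_{i-1}v_p\in E(G)$'' on a longest path $P=v_0v_1\cdots v_p$. This is false in general: take $G=K_{2,3}$ with a longest path $x,a,y,b,z$; then $N(x)=N(z)=\{a,b\}=\{v_1,v_3\}$, so the index sets $\{i:v_0v_i\in E\}=\{1,3\}$ and $\{i:v_{i-1}v_p\in E\}=\{2,4\}$ are disjoint, and no crossing chord exists. The correct proof of the lemma does not rely on finding such a crossing pair. One clean route: let $i$ be the largest index with $v_0v_i\in E$ and $j$ the smallest with $v_jv_p\in E$; maximality of $P$ gives $i\ge d(v_0)\ge\delta$ and $p-j\ge d(v_p)\ge\delta$, so if $j<i$ the cycle $v_0v_1\cdots v_jv_pv_{p-1}\cdots v_iv_0$ has length at least $2\delta$, while if $j\ge i$ one uses $2$-connectedness to reroute and still obtain a cycle of the required length (or argues via a longest cycle and a fan from an external vertex). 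You already flag this step as the delicate one; just be aware that the specific mechanism you wrote down needs to be replaced.
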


In fact, Theorem \ref{EG1} was deduced as a simple corollary of Theorem \ref{EG2}. Recently numerous mathematicians started investigating similar problems for $r$-uniform hypergraphs. We will refer to $r$-uniform hypergraphs as an $r$-graphs for simplicity. 
All $r$-graphs are simple (i.e. contain no multiple edges), unless stated otherwise.  
\begin{definition}
A \emph{Berge cycle} of length $t$ in a hypergraph, is an alternating sequence of distinct vertices and hyperedges,  $v_0, e_1, v_1, e_2, v_2, \dots, v_{t-1}, e_t,v_0$ such that, $v_{i-1},v_i \in e_i$, for $i =1,2,\dots t$, (where indices taken modulo $t$).
  \end{definition}


\begin{definition} 
A \emph{Berge path} of length $t$ in a hypergraph, is an alternating sequence of distinct vertices and hyperedges,  $v_0, e_1, v_1, e_2, v_2, \dots, e_t, v_t$ such that, $v_{i-1},v_i \in e_i$, for $i =1,2,\dots t$.  
\end{definition}

The first extension of Erd\H{o}s and Gallai~\cite{Er-Ga} result, was by Gy\H{o}ri, Katona, and Lemons~\cite{GyoKaLe}, who extended Theorem \ref{EG1} for $r$-graphs. It turns out that the extremal numbers have a different behavior when $k \leq r$ and $k > r$

\begin{theorem}[Gy\H{o}ri, Katona and Lemons~\cite{GyoKaLe}]\label{GKL1} Let $r \geq k \geq 3$, and let $\h$ be an $n$-vertex $r$-graph with no Berge path of length $k$. Then $e(\h) \leq \frac{(k-1)n}{r+1}.$
\end{theorem}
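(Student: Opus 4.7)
The plan is to proceed by induction on $n$, analyzing the endpoints of a longest Berge path. The base case (say $n \leq r$) is immediate since $\h$ then has at most one edge and $(k-1)n/(r+1) \geq 1$ in the relevant range. For the inductive step, fix a longest Berge path $P = v_0, e_1, v_1, \ldots, e_s, v_s$ in $\h$, where $s \leq k-1$ by hypothesis. The central observation is that every edge $f$ containing the endpoint $v_0$ must either belong to $\{e_1,\ldots,e_s\}$ or satisfy $f \subseteq V(P) = \{v_0,\ldots,v_s\}$; otherwise any $u \in f \setminus V(P)$ would produce the strictly longer Berge path $u, f, v_0, e_1, v_1, \ldots, e_s, v_s$, contradicting maximality. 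Since $|f| = r \geq k \geq s+1 = |V(P)|$, the inclusion $f \subseteq V(P)$ forces $r=k$ and $s=k-1$, so for $k < r$ we conclude $\deg(v_0) \leq s \leq k-1$. The same bound holds at $v_s$, and a P\'osa-style rotation (swapping $v_0$ with any $w \in e_1 \setminus \{v_1\}$ yields another longest Berge path with the same edge set) extends this degree restriction to all $r-1$ vertices of $e_1 \setminus \{v_1\}$.

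The next step is to convert these endpoint restrictions into an inductive removal. Removing only $v_0$ is insufficient: it loses up to $k-1$ edges for just one vertex, yielding a ratio far worse than the required $(k-1)/(r+1)$. Instead, the goal is to locate a set $S$ of roughly $r+1$ vertices carrying at most $k-1$ edges, with every edge meeting $S$ lying entirely in $S$. A natural candidate combines $V(P)$ with additional vertices of $e_1$ to reach size $r+1$; the rotation argument constrains which edges can meet $S \setminus V(P)$, since those vertices also serve as endpoints of longest Berge paths and hence must have all incident edges on $P$ (by the same extension argument). Deleting $S$ leaves a Berge-$k$-path-free hypergraph on $n - (r+1)$ vertices; applying the inductive hypothesis gives $e(\h) \leq (k-1)(n - r - 1)/(r+1) + (k-1) = (k-1)n/(r+1)$, closing the induction.

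The main obstacle is precisely this structural step: verifying that a removable ``block'' of $r+1$ vertices with at most $k-1$ internal edges, and such that no edge of $\h$ crosses its boundary, always exists. The difficulty is exacerbated by the fact that the edges $e_i$ along the path may contain vertices off $V(P)$ that also participate in many other edges of $\h$, potentially preventing a clean deletion. The extremal examples --- disjoint unions of hypergraphs on $r+1$ vertices each carrying $k-1$ edges --- show that such a block does exist in the tight case, and the inductive argument essentially mimics this decomposition; the technical heart is to verify the block's existence in the general case, leveraging the endpoint degree bound and the rotation lemma to control how the rest of $\h$ attaches to $V(P) \cup e_1$.
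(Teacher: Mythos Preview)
Your outline correctly identifies the target---a removable set of about $r+1$ vertices meeting at most $k-1$ edges---but you explicitly leave its existence unproved, and that is the entire content of the theorem; everything else is bookkeeping. Two of your intermediate claims also need repair. First, the rotation ``swap $v_0$ with any $w \in e_1 \setminus \{v_1\}$'' only gives a longest Berge path when $w \notin V(P)$: if $w = v_j$ for some $j \geq 2$ then the sequence $w, e_1, v_1, e_2, \ldots$ repeats $v_j$. Handling such $w$ requires a different rearrangement (reversing an initial segment), and what one learns concerns $e_j \setminus V(P)$ rather than $v_j$ itself. Second, your degree bound $\deg(v_0) \leq s$ assumes $k < r$; when $k = r$ an edge $f \ni v_0$ with $f \subseteq V(P)$ is possible and the argument breaks (indeed the extremal structure is genuinely different there). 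Finally, the condition you impose on $S$---that every edge meeting $S$ lie entirely inside $S$---is stronger than needed: for the induction it suffices that at most $k-1$ edges of $\h$ meet $S$, regardless of whether they cross out.

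The paper takes a different route and does not argue directly on the path-free $r$-graph. It adds a new vertex $v$ to every hyperedge, producing an $(r+1)$-graph $\h'$ on $n+1$ vertices, and shows that a Berge cycle of length $\geq k$ in $\h'$ would yield a Berge path of length $k$ in $\h$. The bound then follows from the Berge-cycle result $\ex_{r+1}(n+1,\BC_{\geq k}) \leq \lfloor n/(r+1)\rfloor (k-1) + \mathbb{1}_{(r+1)\mathbb{N}^*}(n+1)$, whose proof (via the structural Lemma~\ref{main} on longest semi-paths) carries out precisely the rotation-and-block-removal programme you sketch---but in the cycle setting, where the rotations are cleaner. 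So your instinct is right, but the paper shows it is more efficient to pass to cycles first and do the structural work there.
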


\begin{theorem}[Gy\H{o}ri, Katona and Lemons~\cite{GyoKaLe}]\label{GKL2}  Let $k > r + 1 > 3$, and let $\h$ be an $n$-vertex $r$-graph with no Berge-path of length $k$. Then $e(\h) \leq \frac{n}{k}{k \choose r}$. 
\end{theorem}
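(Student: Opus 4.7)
The plan is to proceed by induction on $n$. For the base case $n \leq k$, we have $e(\h) \leq \binom{n}{r}$, and
\[
\frac{\binom{n}{r}}{n} = \frac{\binom{n-1}{r-1}}{r} \leq \frac{\binom{k-1}{r-1}}{r} = \frac{\binom{k}{r}}{k},
\]
since $\binom{n-1}{r-1} \leq \binom{k-1}{r-1}$ whenever $n \leq k$. Thus $e(\h) \leq \frac{n}{k}\binom{k}{r}$, settling the base case.

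For the inductive step ($n > k$), since the desired bound is additive over connected components, I may assume $\h$ is connected. My goal is then to exhibit a set $X \subseteq V(\h)$ with $|X| = k$ such that at most $\binom{k}{r}$ hyperedges of $\h$ meet $X$. Granted such an $X$, the subhypergraph $\h - X$ has $n - k$ vertices and no Berge path of length $k$, so by induction $e(\h - X) \leq \frac{n-k}{k}\binom{k}{r}$, and
\[
e(\h) \leq \binom{k}{r} + e(\h - X) \leq \binom{k}{r} + \frac{n-k}{k}\binom{k}{r} = \frac{n}{k}\binom{k}{r}.
\]

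To construct $X$, I would take a longest Berge path $P = v_0, e_1, v_1, \ldots, e_s, v_s$ of $\h$; by hypothesis $s \leq k-1$, so $|V(P)| \leq k$. The key structural consequence of maximality is that any hyperedge $f$ with $v_0 \in f$ and $f \neq e_1$ satisfies $f \subseteq V(P)$, since a vertex $u \in f \setminus V(P)$ would produce the longer Berge path $u, f, v_0, e_1, v_1, e_2, \ldots, e_s, v_s$. The symmetric statement holds at $v_s$. Combining this with a P\'osa-style rotation of $P$ through hyperedges incident to its endpoints, one generates many longest Berge paths whose endpoints all obey the same structural constraint; I would then use this enlarged family of endpoints to control the hyperedges meeting $V(P)$, and (if $|V(P)| < k$) extend $V(P)$ to the desired set $X$ of exactly $k$ vertices by absorbing vertices reached by branching hyperedges.

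The main obstacle will be making the rotation argument work cleanly in the $r$-uniform setting. Each hyperedge contains $r \geq 3$ vertices, so a single rotation re-routes more of the path than in the graph case, and the intermediate vertices $v_1, \ldots, v_{s-1}$ may lie on branching hyperedges that exit $V(P)$; accounting for these branching edges requires the enlarged pool of longest-path endpoints produced by rotations. The technical heart of the proof is carrying out this rotation bookkeeping and thereby certifying that at most $\binom{k}{r}$ hyperedges of $\h$ touch the final set $X$, which then closes the induction.
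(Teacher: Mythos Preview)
The paper does not prove this theorem. Theorem~\ref{GKL2} is quoted as a known result of Gy\H{o}ri, Katona and Lemons~\cite{GyoKaLe} and serves only as background for the regime $k>r+1$, which the present paper does not treat; all of the paper's own arguments concern the opposite regime $k\le r$. So there is no proof here to compare your proposal against.

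On the proposal itself: the base case and the inductive reduction are fine, but what you call ``the technical heart'' is the entire content of the theorem, and you have not supplied it. Your goal is a set $X$ of size $k$ meeting at most $\binom{k}{r}$ hyperedges. The maximality of $P$ indeed forces every hyperedge through $v_0$ or $v_s$ to lie inside $V(P)$, but it says nothing directly about hyperedges through the interior vertices $v_1,\dots,v_{s-1}$, and those edges may well leave $V(P)$. The P\'osa-style rotation you invoke is substantially more delicate for $r$-graphs than for graphs: a hyperedge $f\ni v_0$ contains $r-1$ other vertices of $V(P)$, and rotating through $f$ does not canonically single out a new endpoint, nor does iterating rotations obviously sweep out enough of $V(P)$ to control all incident hyperedges. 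Until that bookkeeping is actually carried out and shown to cap the number of edges touching $X$ at $\binom{k}{r}$, this is an outline rather than a proof.
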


The remaining case when $k = r + 1$ was solved later by Davoodi, Gy\H{o}ri, Methuku, and Tompkins~\cite{DavoodiGMT}, the extremal number matches the upper bound of Theorem  \ref{GKL2}.

Similarly the extremal hypergraphs when Berge cycles of length at least $k$ are forbidden, are different in the cases when $k \geq r + 2$ and $k \leq r+1$ with an exceptional third case when $k=r$.
The latter has a surprisingly different extremal hypergraph. F\H{u}redi, Kostochka and Luo~\cite{furedi2018avoiding} provide sharp bounds and extremal constructions for infinitely many $n$, for $k\geq r+3\geq6$. Later they~\cite{furedi2018avoiding2} also determined exact bounds and extremal constructions for all $n$, for the case  $k \geq r+4$. Kostochka and Luo~\cite{KostochkaLuo} determine a bound for $k \leq r-1$ which is sharp for infinitely many $n$. Ergemlidze, Gy\H{o}ry, Metukhu, Salia, Tompikns and Zamora~\cite{ergemlidze2018avoiding} determine a bound in the cases where $k \in \{r+1,r+2\}$. The case when $k=r$ remained open.  Both papers~\cite{KostochkaLuo,ergemlidze2018avoiding} conjectured the maximum number of edges to be bounded by $\max\Big\{\frac{(n-1)(r-1)}{r},n-(r-1)\Big\}$ (See Figure \ref{F1}). 

\begin{theorem}[F\"uredi, Kostochka and Luo~\cite{furedi2018avoiding,furedi2018avoiding2}] Let $r \geq 3$ and $k \geq r + 3$, and suppose $\h$ is an $n$-vertex $r$-graph with no Berge cycle of length $k$ or longer. Then $e(\h) \leq \frac{n-1}{k-2}{k-1 \choose r}.$
\end{theorem}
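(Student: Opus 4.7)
The plan is to prove the result by induction on $n$, following the template of the classical Erd\H{o}s--Gallai proof of Theorem~\ref{EG2}. For the base case, consider $n \leq k-1$: here every Berge cycle in $\h$ has length at most $n \leq k-1$ automatically, and one checks directly that $\binom{n}{r} \leq \frac{n-1}{k-2}\binom{k-1}{r}$ holds in the relevant range, so the trivial bound $e(\h) \leq \binom{n}{r}$ suffices.

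For the inductive step, assume $n \geq k$ and look for a decomposition of $\h$ at a cut vertex. Call $v \in V(\h)$ a \emph{separating vertex} if $V(\h) \setminus \{v\}$ admits a partition $V_1 \cup V_2$ into nonempty parts such that every hyperedge of $\h$ lies entirely in $V_1 \cup \{v\}$ or entirely in $V_2 \cup \{v\}$. If such a $v$ exists, set $\h_i$ to be the sub-$r$-graph induced on $V_i \cup \{v\}$ together with all hyperedges contained there, put $n_i = |V_i| + 1$, and note that each $\h_i$ still avoids Berge cycles of length at least $k$. Induction yields $e(\h_i) \leq \frac{n_i - 1}{k-2}\binom{k-1}{r}$, and since $n_1 + n_2 = n + 1$ the two bounds add up to exactly $\frac{n-1}{k-2}\binom{k-1}{r}$.

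The remaining case is when $\h$ has no separating vertex. The key claim here is that any such $r$-graph on $n \geq k$ vertices must contain a Berge cycle of length at least $k$, which contradicts the hypothesis and forces $n \leq k-1$ (after which the trivial edge bound $\binom{n}{r} \leq \binom{k-1}{r}$ finishes the argument). To establish the claim, take a longest Berge cycle $C$ in $\h$; by assumption $|C| < k \leq n$, so there exists a vertex $x \notin V(C)$. The absence of a separating vertex should provide two Berge paths from $x$ that reach $V(C)$ at distinct vertices and that use only hyperedges not already appearing on $C$; splicing these paths into $C$ then yields a strictly longer Berge cycle, contradicting the maximality of $C$.

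The main obstacle is making this cycle-extension argument rigorous in the hypergraph setting. In graphs a Menger-type argument gives the two internally disjoint paths immediately, but for Berge structures one must simultaneously choose, for every hyperedge involved, which two of its $r$ vertices serve as its endpoints along the path or cycle, and one must ensure no hyperedge is re-used between $C$ and the augmenting paths. The inequality $k \geq r+3$ is what gives the slack needed here: it guarantees enough unused hyperedges and enough room along $C$ to reattach the augmenting paths without collision. Formalizing this — most likely through a careful longest-path/rotation argument combined with a book-keeping of the representative pairs for each hyperedge — is the technical heart of the proof, and is where the bulk of the work will lie.
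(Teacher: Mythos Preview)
First, a structural point: the paper does not actually prove this theorem. It is quoted as background from \cite{furedi2018avoiding,furedi2018avoiding2}, and the paper's own contributions concern the complementary range $k \leq r$ (Theorems~\ref{k<r}, \ref{k=r}, \ref{multi} and Lemma~\ref{main}). So there is no ``paper's proof'' to compare against here.

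That said, your proposal has a genuine gap, and it is not a technicality that more bookkeeping will fix. Your key claim is that an $r$-graph on $n \geq k$ vertices with no separating vertex must contain a Berge cycle of length at least $k$. This is false. Take $r = 3$, any $k \geq 6$, and let $\h$ be the sunflower on vertex set $\{a,b,x_1,\dots,x_m\}$ with edges $\{a,b,x_i\}$ for $1 \leq i \leq m$. One checks directly that $\h$ has no separating vertex in your sense: for any $v$, the remaining vertices cannot be split into two nonempty parts with every edge confined to one side, because each edge $\{a,b,x_i\}$ forces $a$, $b$, and $x_i$ to lie together. Yet the longest Berge cycle in $\h$ has length $2$, since each $x_i$ lies in a unique hyperedge. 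So for $m$ large we have $n = m+2 \geq k$, no separating vertex, and no Berge cycle of length $\geq k$; your argument would wrongly declare this impossible.

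The underlying issue is that ``no separating vertex'' is far too weak to force a long Berge cycle; in graphs already, $2$-connected does not imply large circumference (think of $K_{2,m}$). The actual proofs in \cite{furedi2018avoiding,furedi2018avoiding2} do reduce to a $2$-connected-like situation, but then they must genuinely bound the number of edges in that situation, not derive a contradiction. This requires a Kopylov-type argument adapted to hypergraphs, with a careful analysis of how hyperedges interact with a longest Berge path and substantial use of the hypothesis $k \geq r+3$; it is not a short rotation-extension trick. Your outline, as written, would need to replace the ``forces $n \leq k-1$'' step entirely with such an edge-counting argument for the non-separable blocks.
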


\begin{theorem}[Ergemlidze et al.~\cite{ergemlidze2018avoiding}] If $k \geq 4$ and $\h$ is an $n$-vertex $r$-graph with no Berge cycles of length at least $k$, then  if $k = r + 1$ then $e(\h) \leq n - 1$, and if $k = r + 2$ then $e(\h) \leq \frac{(n-1)(r+1)}{r}$.
\end{theorem}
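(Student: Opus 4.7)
The plan is to prove both bounds in parallel by induction on $n$, by analysing the local structure of $\h$ near the endpoints of a longest Berge path. First I reduce to the connected case: both target bounds have the form $c(n - c_0)$ with $c_0 \geq 1$ and are therefore additive across components, the slack in the component count absorbing the loss.

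Let $P = v_0, e_1, v_1, \ldots, e_t, v_t$ be a Berge path of maximum length. Consider any hyperedge $e \neq e_1$ with $v_0 \in e$. Maximality of $P$ forces every vertex of $e$ to lie on $P$, else $e$ together with an outside vertex would prepend to $P$. If $v_j \in e$ with $j \geq 1$, then $v_0, e_1, v_1, \ldots, e_j, v_j, e, v_0$ is a Berge cycle of length $j + 1$, so the no-long-cycle hypothesis yields $j \leq k - 2$. Thus every back-edge at $v_0$ is an $r$-subset of the window $W := \{v_0, v_1, \ldots, v_{k-2}\}$. For $k = r + 1$, $|W| = r$ so $W$ itself is the unique possible back-edge, giving $\deg_\h(v_0) \leq 2$; for $k = r + 2$, $|W| = r + 1$ admits $\binom{r}{r-1} = r$ back-edges through $v_0$, giving $\deg_\h(v_0) \leq r + 1$. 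Symmetric bounds hold at $v_t$.

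The induction step then deletes an endpoint along with its incident hyperedges. In the clean subcase $\deg(v_0) = 1$, deleting $v_0$ and $e_1$ reduces to an $r$-graph on $n - 1$ vertices with $m - 1$ edges that still avoids long Berge cycles, and the inductive bound closes for both targets. The main obstacle is the subcase $\deg(v_0) \geq 2$, where naive removal of $v_0$ sheds more edges than the target ratio permits (even $\deg(v_0) = 2$ already breaks the $(r+1)/r$ budget for $k = r+2$ once $r \geq 2$). Here one must exploit the rigidity forced above: the $\deg(v_0) - 1$ back-edges are specific $r$-subsets of the tiny window $W$, producing a dense local structure that cannot persist throughout $\h$ without yielding a Berge cycle of length $\geq k$ elsewhere. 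One then either re-roots $P$ along a back-edge to obtain a Berge path of equal length whose new endpoint has strictly smaller degree, or identifies an interior vertex of $W$ whose removal preserves the required ratio. Carrying this bookkeeping through while matching the exact constants $1$ and $(r+1)/r$, and verifying in each subcase that the re-routing does not accidentally create a long Berge cycle, is the delicate part of the argument.
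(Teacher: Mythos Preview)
This theorem is not proved in the present paper at all: it is quoted in the introduction as a result of Ergemlidze et al.\ \cite{ergemlidze2018avoiding}, and the paper's own machinery (Lemma~\ref{main} and the theorems deduced from it) is restricted to the regime $k\le r$, whereas here $k=r+1$ or $k=r+2$. So there is no ``paper's own proof'' to compare against.

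Regarding the proposal itself, it is a sketch with a concrete error and an acknowledged gap. The claim that every hyperedge $e\neq e_1$ through $v_0$ lies inside the window $W=\{v_0,\dots,v_{k-2}\}$ is false when $e$ is one of the path edges $e_i$ with $i\ge 2$: your cycle $v_0,e_1,\dots,e_j,v_j,e,v_0$ is only a Berge cycle when $e\notin\{e_1,\dots,e_j\}$. For a small counterexample take $r=3$, $k=4$, vertices $v_0,v_1,v_2,v_3,u$, edges $e_1=\{v_0,v_1,u\}$, $e_2=\{v_0,v_1,v_2\}$, $e_3=\{v_0,v_2,v_3\}$. Then $v_0,e_1,v_1,e_2,v_2,e_3,v_3$ is a longest Berge path, there is no Berge cycle of length $\ge 4$ (only three edges), yet $\deg(v_0)=3>2$ and $e_3\not\subseteq W=\{v_0,v_1,v_2\}$. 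So neither the window containment nor the degree bound $\deg(v_0)\le 2$ holds for an arbitrary longest path. A rotation argument can salvage this, but that is precisely the content you defer to the ``delicate part'', which you explicitly leave undone. As written, the proposal is an outline that correctly identifies the shape of the argument (longest path, endpoint analysis, rotate and delete) but does not establish the key inequalities; for $k=r+2$ in particular, turning the endpoint degree bound into the exact constant $(r+1)/r$ after all rotations is where the real work lies, and nothing here addresses it.
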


\begin{theorem}[Kostochka, Luo \cite{KostochkaLuo}]
\label{KL}
Let $k \geq 4, r \geq k + 1$ and let $\h$ be an $n$-vertex $r$-uniform multi-hypergraph, each edge of $\h$ has multiplicity at most $k - 2$. If $\h$ has no Berge-cycles of length at least $k$,
then $e(H) \leq \frac{(k-1)(n-1)}{r}$. 
\end{theorem}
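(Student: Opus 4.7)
The plan is to proceed by induction on $n$, combining Theorem~\ref{GKL1} (the Berge-path bound) with a longest-Berge-path analysis that forces a low-degree endpoint, and then locating a hyperedge with enough ``private'' vertices whose removal drives the induction.

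First, reduce to the connected case: for disconnected $\h$ with components of sizes $n_1, \ldots, n_c$, applying the inductive hypothesis to each component and summing yields $e(\h) \leq (k-1)(n-c)/r \leq (k-1)(n-1)/r$. Base cases ($n \leq r+1$) follow directly from the multiplicity bound $\leq k-2$.

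For the inductive step on a connected $\h$ with $e(\h) > (k-1)(n-1)/r$: for $n > r+1$ this also exceeds $(k-1)n/(r+1)$, so Theorem~\ref{GKL1} produces a Berge path of length $\geq k$. Take a longest such Berge path $P = v_0, e_1, v_1, \ldots, e_t, v_t$, so $t \geq k$. Combining maximality of $P$ with the no-long-cycle assumption yields two constraints on hyperedges containing $v_0$: (i) every such hyperedge lies on $P$---otherwise either it supplies a vertex off $P$ and extends $P$, or (since $r \geq k+1$) all $r$ of its vertices lie on $P$ and the largest-indexed one, $v_i$ with $i \geq r-1 \geq k$, produces a Berge cycle $v_0, e_1, v_1, \ldots, v_i, e, v_0$ of length $\geq k+1$; and (ii) $v_0 \in e_j$ forces $j \leq k-1$, else the Berge cycle $v_0, e_1, \ldots, v_{j-1}, e_j, v_0$ has length $j \geq k$. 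Hence $d(v_0) \leq k-1$, and symmetrically $d(v_t) \leq k-1$.

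Since $d(v_0) \leq k-1$ only lets us absorb a $(k-1)/r < 1$ loss per vertex removed, single-vertex deletion is inadequate. The key step is to find a hyperedge $e^* \in \{e_1, \ldots, e_{k-1}\}$ with at least $\lceil r/(k-1)\rceil$ vertices of degree one in $\h$ (``private'' vertices). Extending the cycle-formation argument to the off-path vertices of $e_1, \ldots, e_{k-1}$ gives structural constraints: any shared off-path vertex $u \in e_i \cap e_j$ forces $|i-j| \leq k-2$, else the Berge cycle $u, e_i, v_i, e_{i+1}, \ldots, e_j, u$ has length $\geq k$. Combined with the multiplicity bound, a counting argument over the first $k-1$ hyperedges of $P$ locates such an $e^*$. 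Removing $e^*$ and its private set $S$ gives $\h'$ on $n - |S|$ vertices with $e(\h) - 1$ edges; the inductive hypothesis yields $e(\h) - 1 \leq (k-1)(n - |S| - 1)/r$, which rearranges to $e(\h) \leq (k-1)(n-1)/r$ since $(k-1)|S|/r \geq 1$.

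The main obstacle is this last counting step: producing a hyperedge with enough private vertices. Naive averaging falls short, so one must exploit additional structure---the forbidden-overlap bound $|i-j| \leq k-2$ severely limits the sharing pattern among $e_1, \ldots, e_{k-1}$, and iterating the long-cycle dichotomy for various candidate back-edges further restricts it. The multiplicity hypothesis $\leq k-2$ enters here to rule out degenerate cases where many copies of a single hyperedge artificially crowd the path and frustrate the count.
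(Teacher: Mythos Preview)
Your proposal has a genuine gap at its decisive step. You need a hyperedge $e^*\in\{e_1,\ldots,e_{k-1}\}$ carrying at least $\lceil r/(k-1)\rceil$ degree-one vertices, and you already concede that naive averaging fails; but the structural constraint you offer in its place---that a shared off-path vertex $u\in e_i\cap e_j$ forces $|i-j|\le k-2$---is vacuous for indices $i,j\in\{1,\ldots,k-1\}$, so it imposes no restriction whatsoever on how the first $k-1$ hyperedges overlap. Nothing else in the sketch supplies a working replacement, and without one the induction does not close. (A secondary issue: Theorem~\ref{GKL1} as stated applies to simple $r$-graphs, whereas here $\h$ is a multi-hypergraph.)

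The paper sidesteps this difficulty by not seeking many private vertices inside a single edge. Its Lemma~\ref{main} instead produces a \emph{set} $S$ of vertices---either $|S|=r-1$ with $|N_h(S)|\le m=k-2$, or $|S|=r$ with $|N_h(S)|\le k-1$---whose removal closes the induction directly: since $r\ge k+1$ case~(iii) is excluded, the second alternative gives $e(\h)\le (k-1)+\frac{(k-1)(n-r-1)}{r}=\frac{(k-1)(n-1)}{r}$ exactly, and the first gives $e(\h)\le (k-2)+\frac{(k-1)(n-r)}{r}\le\frac{(k-1)(n-1)}{r}$, which reduces to $r\ge k-1$. The set $S$ is assembled from a longest \emph{semi-path} $e_1,v_1,\ldots,e_t,v_t$: start with $A_1=e_1\setminus U$, and for each $v_{i_j}\in e_1\cap U$ reroute the semi-path (Lemmas~\ref{missing} and~\ref{extra}) to show that $e_{i_j}\setminus U$, or else a suitable defining vertex $v_{i_j-1}$, is likewise trapped by $\{e_1,\ldots,e_{k-1}\}$; this accumulates a set $A$ of size at least $r-1$. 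The trapped vertices are spread across several of the first $k-1$ hyperedges rather than concentrated in one---which is precisely why a single-edge count of the kind you attempt cannot be made to work.
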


Kostochka and Luo obtain their result from the incidence bipartite graph by investigating the structure of 2-connected bipartite graphs. In a similar way a previous result of Jackson~\cite{jackson1981cycles} gives an upper bound on the number of edges of a multi $r$-graph with no Berge cycle of length at least $r$.
\begin{theorem}[Jackson~\cite{jackson1981cycles}]
\label{bipartite}
Let $G$ be a bipartite graph with bipartition $A$ and $B$ such that $\abs{A} = n$ and every vertex in $B$ has degree at least $r$, if $\abs{B} > \floor{\frac{n-1}{r-1}}(r-1)$ then $G$ contains a cycle of length at least $2r$. 
\end{theorem}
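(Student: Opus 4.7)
I would prove the bound by induction on $n=\abs{A}$, combining a block-tree decomposition with a P\'osa-type rotation argument. The base case $n=r$ is immediate: every $b\in B$ with $d(b)\geq r=\abs{A}$ is adjacent to all of $A$, so if $\abs{B}\geq r$, any $r$ such vertices together with $A$ span a $K_{r,r}$ containing a cycle of length $2r$, contradicting the hypothesis. Hence $\abs{B}\leq r-1=\floor{(r-1)/(r-1)}(r-1)$, matching the claim.

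For the inductive step $n>r$, I first reduce to the 2-connected case. If $G$ has a cut vertex, peel off an end-block $K$ of the block-cut tree, meeting the rest of $G$ in at most one cut vertex $v$. Both $K$ and $G'=G-(V(K)\setminus\{v\})$ are strictly smaller bipartite graphs with no cycle of length $\geq 2r$, and the minimum-degree condition on $B$ is preserved after appropriate handling of $v$; in particular, when $v\in B$ and $d_G(v)=r$, one removes $v$ together with $K$, so that the remaining $B$-vertices in $G'$ retain degree $\geq r$. Applying induction to each piece and summing, while accounting for the cut vertex via $\abs{A\cap V(K)}+\abs{A\cap V(G')}\leq \abs{A}+1$, telescopes to the claimed bound on $\abs{B}$.

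If $G$ is 2-connected, the heart of the argument is the claim that such a bipartite graph with $\delta(B)\geq r$ contains a cycle of length at least $2\min(\abs{A},r)$. Since $\delta(B)\geq r$ forces $\abs{A}\geq r$, this already produces a cycle of length $\geq 2r$, contradicting the hypothesis, so this case is actually impossible. To establish the claim, consider a longest path $P=v_0v_1\dots v_m$. If an endpoint $v_0$ lies in $B$, its $\geq r$ neighbors all lie on $P$ at odd positions, and any neighbor at position $\geq 2r-1$ closes off a cycle of length $\geq 2r$; so all neighbors lie among the $r-1$ odd positions at most $2r-3$, forcing $d(v_0)\leq r-1$ and contradicting $d(v_0)\geq r$. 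Hence both endpoints of every longest path lie in $A$, and one uses 2-connectedness together with repeated rotations to splice two such longest paths (through a common internal vertex of high degree in $B$) into a cycle of length at least $2r$.

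The main obstacle is the splicing step in the 2-connected case: the bipartite parity restricts admissible P\'osa rotations, and one must carefully use 2-connectedness to ensure that the spliced cycle really attains length $\geq 2r$ rather than being shortened by the splice. A cleaner alternative is to invoke a classical bipartite Dirac-type theorem, e.g.\ of Moon--Moser, to produce the long cycle directly and bypass the rotation entirely; this is essentially the route taken in Jackson's original paper~\cite{jackson1981cycles}.
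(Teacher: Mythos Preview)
The paper does not prove this statement at all: Theorem~\ref{bipartite} is merely quoted from Jackson's original paper~\cite{jackson1981cycles} as background, to note that the incidence-bipartite-graph approach already yields a bound of this type. The paper's own contributions (Theorems~\ref{k<r}, \ref{k=r}, \ref{multi}) are proved by an entirely different route, namely the hypergraph semi-path argument of Lemma~\ref{main}, and Jackson's theorem is never invoked in any proof. So there is no ``paper's own proof'' to compare against.

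As for your sketch on its own merits: the block-tree reduction and the base case are fine, but the $2$-connected case is the entire content of the theorem and you have not actually proved it. Your rotation argument only shows that endpoints of a longest path lie in $A$; the ``splicing'' you allude to---using $2$-connectedness to combine two longest paths into a cycle of length at least $2r$---is exactly the hard step in Jackson's paper, and it does not follow from anything you have written. You acknowledge this yourself (``The main obstacle is the splicing step\dots''), and the suggested fallback to Moon--Moser does not work either: Moon--Moser concerns Hamiltonicity of balanced bipartite graphs under a two-sided minimum-degree condition, whereas here the degree hypothesis is one-sided and the parts may be highly unbalanced. In short, what you have is an outline with the key lemma missing, not a proof.
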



In this paper we study the structure of $r$-graphs containing no Berge cycles of length at least $k$, for all $3 \leq k \leq r$. By exploring the structure of the hypergraphs, instead of bipartite graphs, we are able to find extremal number in the case when $k=r$, which also gives us a simple proof for Theorem \ref{KL}. Even more our method lets us determine the extremal number for every value of $n$  in both simple $r$-graphs and multi $r$-graphs.




\section{Notation and results}

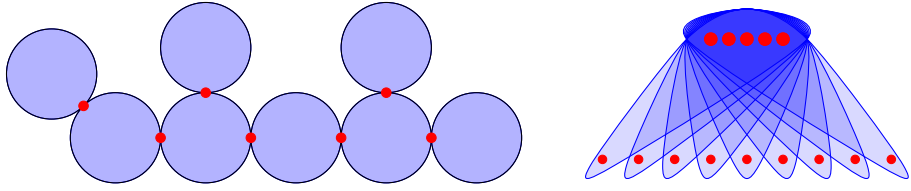
\begin{figure}
 \centering
\begin{tikzpicture}[scale = 0.6] 
\filldraw[blue,fill opacity=0.3] (1,0) arc (0:360: 1cm and 1cm);
\draw (1,0) arc (0:360: 1cm and 1cm);

\filldraw[blue,fill opacity=0.3] (1,0) + (xyz polar cs:angle=135,radius=2) arc (0:360: 1cm and 1cm);
\draw (1,0) + (xyz polar cs:angle=135,radius=2) arc (0:360: 1cm and 1cm);

\filldraw[blue,fill opacity=0.3] (3,0) arc (0:360: 1cm and 1cm);
\draw (3,0) arc (0:360: 1cm and 1cm);

\filldraw[blue,fill opacity=0.3] (5,0) arc (0:360: 1cm and 1cm);
\draw (5,0) arc (0:360: 1cm and 1cm);

\filldraw[blue,fill opacity=0.3] (7,0) arc (0:360: 1cm and 1cm);
\draw (7,0) arc (0:360: 1cm and 1cm);

\filldraw[blue,fill opacity=0.3] (9,0) arc (0:360: 1cm and 1cm);
\draw (9,0) arc (0:360: 1cm and 1cm);

\filldraw[blue,fill opacity=0.3] (3,2) arc (0:360: 1cm and 1cm);
\draw (3,2) arc (0:360: 1cm and 1cm);

\filldraw[blue,fill opacity=0.3] (7,2) arc (0:360: 1cm and 1cm);
\draw (7,2) arc (0:360: 1cm and 1cm);

\filldraw[red] (1,0) circle (3pt) (3,0) circle (3pt) (5,0) circle (3pt) (7,0) circle (3pt)
(2,1) circle (3pt) (6,1) circle (3pt)  (xyz polar cs:angle=135,radius=1) circle(3pt);

\end{tikzpicture}\qquad
\begin{tikzpicture}[scale=.8, rotate = -90]

\foreach \x in {-2.4,-1.8,...,2.4}{
\filldraw[blue, fill opacity=0.15] plot [smooth cycle] coordinates {(0,1) (-0.5,0) (0,-1) (2.1,\x-0.2) (2.1,\x+0.2) } ;
\filldraw[red] (2,\x) circle (2pt);
}

\filldraw[red] (0,.6) circle (3pt)  (0,.3) circle (3pt) (0,.0) circle (3pt) (0,-.3) circle (3pt) (0,-.6) circle (3pt);

\end{tikzpicture}
\caption{The extremal graphs from Theorems \ref{k<r}, \ref{k=r} and \ref{multi}. The figure on the left is a block tree, each block contains same number of vertices,  either $r$ in the case of multi-hypergraphs or $r+1$ otherwise and  $k-1$ hyperedges. The figure on the right is $\S_{n}^{(r)}$ the $n$-vertex $r$-star, each hyperedge share the same $r-1$ vertices.} \label{F1}
\end{figure}

Given a hypergraph $\h$, let $V(\h)$ and $E(\h)$ denote the set of vertices and hyperedges of $\h$, respectively, and let $v(\h) := \abs{V(\h)}$, $e(\h) := \abs{E(\h)}$. We denote by $\mathbb{1}_{r \mathbb{N}^*}(n)$, the characteristic function of $r \mathbb{N}^*$: the function which is 1 when $n$ is a positive multiple of $r$ and 0 otherwise. 
 A hypergraph is $\F$-free if it doesn't contain a copy of any hypergraph from the family $\F$ as a sub-hypergraph.  In the following, we are particularly interested in the families $\BP_k$ and $\BC_{\geq k}$, the family of Berge path of length $k$ and the family of Berge cycles of length at least $k$, respectively.
The Tur\'an number $ex_r(n,\F)$ and $ex_r^{multi}(n,\F)$  are the maximum number of hyperedges in a $\F$-free hypergraph or multi-hypergraph respectively on $n$ vertices.

Let $\mathcal H$ be a hypergraph. Then its \emph{2-shadow}, denoted by $\partial_2 \h$, is the collection of pairs of vertices that lie in some hyperedge of $\h$.
%
The graph $\h$ is \emph{connected} if $\partial_2(\h)$ is a connected graph.


Let $n,k,r$ be integers such that $k \leq r$, for fix $s \in\{r,r+1\}$. 
A $r$-graph $\h$ is called a $(s,k-1)$-block tree if $\partial_2(\h)$ is connected and every 
$2$-connected block
 of $\partial_2(\h)$ consists of $s$ vertices which induce $k-1$ hyperedges in $\h$. 
A $(s, k - 1)$-block tree  contains no Berge-cycle of length at least $k$, because each of its blocks contain fewer than $k$ hyperedges, see Figure \ref{F1}.

We define the $r$-star, $\S_n^{(r)}$, as the $n$-vertex $r$-graph with vertex set $V(\S_n^{(r)}) = \{v_1,v_2,\dots,v_n\}$ and edge set $E(\S_n^{(r)}) = \{\{v_1,v_2,\dots,v_{r-1},v_i\}: r \leq i \leq n\}$, the set $\{v_1,v_2,\dots,v_{r-1}\}$ is called the center of the star.
Since $\S_n^{(r)}$ has just $r-1$ vertices of degree bigger than 1, then $\S_n^{(r)}$ contains no Berge cycle of length at least $r$.

\begin{definition}
For a set $S\subseteq V$, the \emph{hyperedge neighborhood} of $S$ in a $r$-graph $\h$ is the set \begin{displaymath}
N_h(S):=\{h\in E(\h) | h \cap \S \not=\emptyset \}
\end{displaymath} of hyperedges that are incident with at least one vertex of $S$.
\end{definition}

Our Main results are: 



\begin{theorem}\label{k<r} Let $k,n$ and $r$ be positive integers such that $4\leq k< r$, then 
\begin{displaymath}  \ex_r(n,\BC_{\geq k})= \floor{\frac{n-1}{r}}(k-1)+ \mathbb{1}_{r \mathbb{N}^*}(n) \end{displaymath}
If $r|(n-1)$ the only extremal $n$-vertex $r$-graphs are the $(r+1,k-1)$-block trees.
\end{theorem}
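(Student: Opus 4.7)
Write $n - 1 = qr + t$ with $0 \leq t \leq r - 1$, so $q = \floor{(n-1)/r}$. Take $q$ copies of a $2$-connected $r$-graph on $r+1$ vertices carrying any $k - 1$ of the $r+1$ possible hyperedges (its $2$-shadow is readily verified to be $K_{r+1}$ whenever $k \geq 3$, hence $2$-connected), glued in tree fashion at single cut vertices. This $(r+1, k-1)$-block tree has $qr + 1$ vertices and $q(k-1)$ hyperedges, and it is $\BC_{\geq k}$-free since each block contains only $k - 1$ hyperedges. When $r \mid n$ (equivalently $t = r - 1$), attach one extra hyperedge sharing a single vertex with the existing tree and using the remaining $r - 1$ vertices; this matches the $\mathbb{1}_{r\mathbb{N}^*}(n)$ summand.

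\textbf{Upper bound setup.} The plan is strong induction on $n$. One checks that the target function $f(n) := \floor{(n-1)/r} (k - 1) + \mathbb{1}_{r\mathbb{N}^*}(n)$ is subadditive (routine case check), so by summing over connected components it suffices to treat connected $\h$. For connected $\h$ I consider the block-cut tree of $\partial_2(\h)$. Since any hyperedge of $\h$ induces a $K_r$ in $\partial_2(\h)$ and $K_r$ is $2$-connected for $r \geq 3$, each hyperedge of $\h$ lies wholly inside a single block of $\partial_2(\h)$; in particular $\partial_2(\h)$ has no bridges and every block has at least $r$ vertices. Writing $s_B := |V(B)|$ and $F_B := \{e \in E(\h) : e \subseteq V(B)\}$, one has $e(\h) = \sum_B |F_B|$ and $\sum_B (s_B - 1) = n - 1$.

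\textbf{Key block lemma --- the main obstacle.} The heart of the argument is the bound $|F_B| \leq f(s_B)$ for every $2$-connected block $B$ of $\partial_2(\h)$; equivalently, any $r$-graph whose $2$-shadow is $2$-connected and which has no Berge cycle of length at least $k$ satisfies $e \leq f(v)$. I would pursue this by showing that any excess of hyperedges forces a Berge cycle of length $\geq k$. Take a longest Berge path $P = v_0, e_1, \ldots, e_t, v_t$ inside the block; $2$-connectedness of the $2$-shadow provides two internally vertex-disjoint $v_0$--$v_t$ paths, one of which combined with $P$ yields a closing Berge cycle whose length must be pushed up to $k$. The crucial lever is $k \leq r$: each hyperedge carries $r \geq k$ vertices but contributes only two witnesses to $P$, leaving at least $r - 2 \geq k - 2$ spare vertices that can be recruited as fresh witnesses along the rerouted arc without collisions. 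Making the witness bookkeeping rigorous --- especially handling rerouting paths whose edges come from hyperedges already appearing on $P$ --- is the technically delicate step.

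\textbf{Combining and uniqueness.} A direct calculation shows that $f$ satisfies $\sum_B f(s_B) \leq f(n)$ whenever $\sum_B (s_B - 1) = n - 1$ and each $s_B \geq r$, with equality only when every $s_B = r + 1$. Combined with the block lemma this yields $e(\h) \leq f(n)$. For the extremal characterization when $r \mid (n-1)$, tracing equality through both inequalities forces $s_B = r + 1$ and $|F_B| = k - 1$ in every block, so $\h$ must be exactly an $(r+1, k-1)$-block tree.
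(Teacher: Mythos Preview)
Your block-decomposition framing is sound --- every hyperedge sits inside a single $2$-connected block of $\partial_2(\h)$, the block sizes satisfy $\sum_B(s_B-1)=n-1$, and the subadditivity calculation you outline does reduce the theorem to the ``key block lemma'' that a $\BC_{\geq k}$-free $r$-graph with $2$-connected shadow on $s$ vertices has at most $f(s)$ hyperedges. The problem is that this block lemma \emph{is} the theorem: restricting to $2$-connected shadows does not make the edge-count bound easier, and your sketch does not establish it.

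Concretely, your plan is to take a longest Berge path $P$ and close it via two shadow-disjoint $v_0$--$v_t$ paths coming from $2$-connectedness. But a path in $\partial_2(\h)$ is not a Berge path in $\h$: each shadow edge on the rerouting arc must be witnessed by a hyperedge, and nothing prevents those witnesses from being the very hyperedges $e_1,\dots,e_t$ already used on $P$, or from coinciding with one another. The ``$r-2$ spare vertices per hyperedge'' heuristic does not resolve this, because the shortage is of distinct \emph{hyperedges}, not of vertices. More fundamentally, even if the closing worked, you would conclude only that the longest Berge path has length at most $k-1$; you have given no argument linking that to the precise bound $e(\h)\le f(s_B)$, which is where the floor and the $\mathbb{1}_{r\mathbb{N}^*}$ term live.

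The paper avoids this entirely. Its Lemma~\ref{main} works with a maximal \emph{semi-path} (starting from a hyperedge rather than a vertex) and, through a sequence of path rearrangements, extracts a set $S$ of size $r-1$ or $r$ whose hyperedge neighbourhood has at most $1$ (respectively $k-1$) elements. Deleting $S$ and applying induction on $n$ gives the bound directly, with the two cases of $|S|$ accounting for the two terms in $f(n)$. The uniqueness when $r\mid(n-1)$ then follows by showing that case $|S|=r$ must occur and that the edges incident to $S$ attach at a single vertex of the inductively-characterized block tree, using Claim~\ref{path}. This is genuinely different machinery from a longest-path-plus-rerouting argument, and it is what you are missing.
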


We note that as a corollary of Theorem $\ref{k<r}$ we obtain a slightly stronger version of Theorem \ref{GKL1}

\begin{corollary}\label{P1} Let $k,n$ and $r$ be positive integer with $4\leq k \leq r$, then \begin{displaymath}
\ex_r(n,\BP_{k}) = \floor{\frac{n}{r+1}}(k-1)+ \mathbb{1}_{(r+1) \mathbb{N}^*}(n+1)
\end{displaymath}
\end{corollary}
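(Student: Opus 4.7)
The plan is to deduce Corollary~\ref{P1} from Theorem~\ref{k<r} via a simple coning construction. Given an $n$-vertex $r$-graph $\h$ with no Berge path of length $k$, I would form the $(r+1)$-graph $\h'$ on $n+1$ vertices by adjoining a new vertex $v^*$ to every edge, i.e.\ $V(\h')=V(\h)\cup\{v^*\}$ and $E(\h')=\{e\cup\{v^*\}: e\in E(\h)\}$. The map $e\mapsto e\cup\{v^*\}$ is a bijection $E(\h)\to E(\h')$, so simplicity and edge count are preserved. Once I know $\h'$ contains no Berge cycle of length at least $k$, Theorem~\ref{k<r} applied with parameters $(r+1,n+1,k)$ — valid because $4\le k\le r<r+1$ — immediately yields
\[
e(\h)=e(\h')\le \floor{\frac{n}{r+1}}(k-1)+\mathbb{1}_{(r+1)\mathbb{N}^*}(n+1),
\]
which is exactly the upper bound claimed.

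The main obstacle is proving that $\h'$ is $\BC_{\geq k}$-free, which I would handle by contrapositive: if $\h'$ contains a Berge cycle $v_0,e_1,v_1,\dots,e_t,v_0$ of length $t\ge k$, I produce a Berge path of length $k$ in $\h$. Because the cycle vertices are distinct and $v^*$ lies in every edge of $\h'$, at most one cycle vertex equals $v^*$. When none does, the same sequence is a Berge cycle of length $t$ in $\h$; if $t>k$ I simply delete an edge, and if $t=k$ I use $|e_i|=r\ge k$ combined with simplicity of $\h$ (otherwise every $e_i$ would coincide with $\{v_0,\dots,v_{k-1}\}$, collapsing all edges) to locate an edge $e_j$ with an extra vertex $u\notin\{v_0,\dots,v_{k-1}\}$ and then reroute as $v_j,e_{j+1},\dots,v_{j-1},e_j,u$. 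When $v^*=v_0$, removing $v^*$ from the edges leaves in $\h$ the Berge sub-path $v_1,e_2,\dots,e_{t-1},v_{t-1}$ of length $t-2$, with the edges $e_1\ni v_1$ and $e_t\ni v_{t-1}$ still available at the ends; for each of $t\ge k+2$, $t=k+1$, and $t=k$ a brief counting argument — using $|e_1\setminus\{v_1,\dots,v_{t-1}\}|\ge r-(t-1)$, and the observation that $e_1,e_t\subseteq\{v_1,\dots,v_{t-1}\}$ would force $e_1=e_t$ against simplicity — supplies the required extensions at one or both ends to reach total length $k$. This cycle-to-path reduction near the boundary $t\in\{k,k+1\}$, particularly when $k=r$, is the delicate point I expect to occupy the most care.

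For the matching construction, I would take $\floor{n/(r+1)}$ pairwise vertex-disjoint blocks, each on $r+1$ vertices and carrying any $k-1$ of the $r+1$ possible $r$-subsets; when $(r+1)\mid (n+1)$ the remaining $r$ unused vertices form one further isolated $r$-edge. Each connected component carries fewer than $k$ hyperedges, so no Berge path of length $k$ can exist, and the total edge count equals $\floor{n/(r+1)}(k-1)+\mathbb{1}_{(r+1)\mathbb{N}^*}(n+1)$, matching the upper bound.
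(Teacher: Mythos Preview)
Your approach is correct and essentially identical to the paper's: both cone $\h$ to an $(r{+}1)$-graph $\h'$ by adding a universal vertex, apply Theorem~\ref{k<r} to $\h'$, and reduce a Berge cycle of length $\ge k$ in $\h'$ to a Berge path of length $k$ in $\h$ via the same counting (using simplicity to rule out $e_1=e_t$ in the tight case). Your case split on $t$ is more explicit than the paper's two-line sketch, and you additionally spell out the lower-bound construction, but the argument is the same.
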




\begin{theorem}\label{k=r}
Let $r>2$ and $n$ be positive integers, then
\begin{displaymath}
\ex_r(n,\BC_{\geq r})= \max\bigg\{\floor{\frac{n-1}{r}}(r-1), n-r+1\bigg\} \end{displaymath}
When $n-r+1 > \frac{n-1}{r}(r-1)$ the only extremal graph is $\S_n^{(r)}$.  When $\frac{n-1}{r}(r-1) > n-r+1$ and $r|(n-1)$ the only extremal graphs are the $(r+1,k-1)$-block trees.
\end{theorem}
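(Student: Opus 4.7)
The plan follows the block-decomposition blueprint used for Theorem~\ref{k<r}, with an enhanced analysis of the 2-connected blocks of the 2-shadow to accommodate the new extremal construction $\S_n^{(r)}$, which becomes $\BC_{\geq r}$-free exactly at the threshold $k = r$. Both the $(r+1,r-1)$-block tree (each block of which contains only $r-1 < r$ hyperedges, so any Berge cycle is confined to a single block and has length $<r$) and the star $\S_n^{(r)}$ (which has only $r-1$ vertices of degree $\geq 2$, while any Berge cycle of length $\geq r$ needs $\geq r$ such vertices) are $\BC_{\geq r}$-free. Counting their edges yields the two terms of the max, establishing the lower bound.

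For the upper bound I would first observe that the target bound is super-additive in $n$ (a routine case check between the two regimes), so an extremal $\h$ may be assumed connected. Decompose $\partial_2 \h$ into its 2-connected blocks $B_1, \ldots, B_m$; each hyperedge forms a $K_r$ in $\partial_2 \h$ and therefore lies entirely within a single block, giving $e(\h) = \sum_i e_i$, where $e_i$ counts the hyperedges supported on $B_i$, and the block-cut tree supplies $\sum_i (v_i - 1) = n - 1$ with $v_i := |V(B_i)| \geq r$. The whole problem then reduces to the following \emph{Key Lemma}: if $\h'$ is an $r$-graph with 2-connected 2-shadow on $v$ vertices and no Berge cycle of length $\geq r$, then $e(\h') \leq \max\{v - r + 1,\, r - 1\}$.

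The case $e(\h') \leq r - 1$ of the Key Lemma is automatic, since a Berge cycle of length $\ell$ requires $\ell$ distinct hyperedges. The content is the case $e(\h') \geq r$, where I would show both $e(\h') \leq v - r + 1$ and that $\h' \subseteq \S_v^{(r)}$. The approach I envision takes a longest Berge path in $\h'$ and performs P\'osa-style rotations exploiting the 2-connectivity of $\partial_2 \h'$: if the intersection of all hyperedges has fewer than $r-1$ common vertices, then shadow 2-connectivity supplies an ``external'' hyperedge that can be spliced into the path to extend it or close it into a Berge cycle of length $\geq r$. Iteration either pins down $r-1$ globally common vertices (whence $\h'$ is a sub-star) or produces the forbidden cycle. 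Carrying out this rotation argument cleanly in the hypergraph setting is the main obstacle of the proof.

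With the Key Lemma in hand, the upper bound on $e(\h)$ follows by maximising $\sum_i \max\{v_i - r + 1, r - 1\}$ (sharpened to $e_i = 1$ when $v_i = r$) subject to $\sum_i (v_i - 1) = n - 1$ and $v_i \geq r$. Per unit of ``vertex budget'' $v_i - 1$, an $(r+1, r-1)$-block yields $(r-1)/r$ hyperedges, whereas a star yields $1$ per unit but with an initial overhead of $r - 1$ before any hyperedge is produced; a direct comparison shows that every mixed configuration is dominated by one of the two pure strategies (all $(r+1, r-1)$-blocks, which requires $r \mid n - 1$ and gives $\lfloor(n-1)/r\rfloor(r-1)$, or a single star block covering all of $\h$, giving $n - r + 1$), with the two regimes separated by the threshold $n = (r-1)^2$. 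Uniqueness in the two strict regimes of the theorem is then recovered by tracing equality through both the Key Lemma (which fixes the block type) and this optimisation (which fixes the block sizes).
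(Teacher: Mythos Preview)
Your route is genuinely different from the paper's. The paper never decomposes $\partial_2 \h$ into blocks; instead it proves a single structural lemma (Lemma~\ref{main}) via a longest \emph{semi-path} argument, which in the case $k=r$ returns either (i) a set of $r-1$ vertices meeting only one hyperedge, (ii) a set of $r$ vertices meeting at most $r-1$ hyperedges, or (iii) an explicit star-plus-one-edge decomposition of $\h$. Theorem~\ref{k=r} is then obtained by induction on $n$: remove the set $S$ from (i)/(ii) and apply induction, or read off the star directly from (iii). Your block decomposition plus optimisation over block sizes is a clean reformulation, and the budget comparison you sketch (``$(r-1)/r$ per unit for a block versus $1$ per unit with overhead $r-1$ for a star'') is correct and makes the threshold $n=(r-1)^2$ transparent; the paper's inductive argument hides this arithmetic.

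The gap is your Key Lemma. Its statement is correct, but all of the difficulty of the theorem is concentrated there, and your one-sentence plan (``P\'osa-style rotations \ldots\ shadow 2-connectivity supplies an external hyperedge'') is not a proof. Two-connectivity of $\partial_2\h'$ gives you graph paths, not hyperedges you can splice into a Berge path, and the rotation machinery for Berge paths does not work the same way as for ordinary graphs: extending a Berge path needs an \emph{unused hyperedge}, not an unused shadow edge, and the obvious rotations can reuse hyperedges. In the paper this difficulty is exactly what forces the introduction of semi-paths (Definition before Lemma~\ref{first hyperedge}), the chain of Lemmas~\ref{first hyperedge}--\ref{extra}, and the long Case~1/Case~2 analysis at the end of Section~\ref{lemma}; case~(iii) of Lemma~\ref{main} is precisely the mechanism that isolates the star when $k=r$. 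Until you supply an argument of comparable strength for your Key Lemma, the proposal remains a correct outline with its central step missing. A smaller point: your optimisation paragraph needs the integrality of the $e_i$ and the refinement $e_i\le 1$ when $v_i=r$ to land on $\lfloor (n-1)/r\rfloor(r-1)$ rather than $\lfloor (n-1)(r-1)/r\rfloor$; this is routine but should be said.
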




\begin{remark} In particular when $n\geq r(r-2) +2$, we have that $\ex_{r}(n,\BC_{\geq r}) = n-r+1$ and $\S_n^{(r)}$ is the only extremal hypergraph.
\end{remark}

\begin{theorem}\label{multi} Let $k,n$ and $r$ be positive integers such that $2\leq k\leq r$.  
Then 
\begin{displaymath}  \ex_r^{multi}(n,\BC_{\geq k})= \floor{\frac{n-1}{r-1}}(k-1) \end{displaymath}
If $r-1|(n-1)$ the only extremal graphs with $n$ vertices are the $(r,k-1)$-block trees.
\end{theorem}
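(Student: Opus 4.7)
The plan is strong induction on $n$, using the block decomposition of the 2-shadow $\partial_2(\h)$ to reduce to the case where $\partial_2(\h)$ is 2-connected (the main technical core). If $n<r$ then $\h$ is empty and the bound is trivial, while if $n=r$ then every hyperedge equals $V(\h)$; should $e(\h)\ge k$, one could take any $k$ distinct vertices and $k$ distinct hyperedges (possible as $k\le r=n$) to assemble a Berge $k$-cycle directly, so $e(\h)\le k-1=\lfloor(r-1)/(r-1)\rfloor(k-1)$.

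The key structural observation enabling the inductive split is that for $r\ge 3$ each hyperedge $e$ induces $K_r$ in $\partial_2(\h)$, which is 2-connected, so $e$'s vertex set lies in a single block of $\partial_2(\h)$ (the $r=2$ case reduces to the elementary forest bound). If $\partial_2(\h)$ is disconnected, split into components and apply induction to each. If $\partial_2(\h)$ has a cut vertex $v$, let $V_1,\dots,V_s$ be the components of $\partial_2(\h)-v$ and let $\h_i$ be the sub-hypergraph on $V_i\cup\{v\}$ consisting of the hyperedges whose non-$v$ vertices land in $V_i$. By induction $e(\h_i)\le\lfloor|V_i|/(r-1)\rfloor(k-1)$, and summing with $\sum|V_i|=n-1$ together with subadditivity of the floor yields the target bound.

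The main obstacle is the 2-connected case: one must establish $e(\h)\le\lfloor(n-1)/(r-1)\rfloor(k-1)$ directly when $\partial_2(\h)$ has no cut vertex. The natural route is to pass to the incidence bipartite graph $B(\h)$ on parts $V(\h)$ and $E(\h)$, whose $E(\h)$-side is $r$-regular, and to argue that 2-connectedness of $\partial_2(\h)$ provides enough bipartite connectivity for a Jackson-style argument (Theorem~\ref{bipartite}); however Jackson's bound only forces cycles of length at least $2r$ once $|E(\h)|$ exceeds $\lfloor(n-1)/(r-1)\rfloor(r-1)$, whereas we need the conclusion to kick in at $|E(\h)|>\lfloor(n-1)/(r-1)\rfloor(k-1)$ with a cycle of length at least $2k$. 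Closing this gap is the crux and exploits the hypergraph structure --- each edge-vertex of $B(\h)$ corresponds to a full $r$-set, not an arbitrary $r$-star: I would argue via a rotation/extension on a longest Berge cycle $C\subseteq\h$, where, if $|V(C)|<k$ and an unused hyperedge $e^\ast$ exists, the 2-connectedness of $\partial_2(\h)$ furnishes two internally disjoint paths from $e^\ast$ back to $C$, which can be lifted to Berge structures and used to reroute $C$ through $e^\ast$ and strictly lengthen it, contradicting maximality.

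Uniqueness when $(r-1)\mid(n-1)$ follows by tracing the equality cases through the induction: every split at a cut vertex must be balanced so that $(r-1)\mid|V_i|$ for each $i$, and each 2-connected atom of the decomposition must be on exactly $r$ vertices carrying $k-1$ parallel copies of the single available $r$-set, so $\h$ must be an $(r,k-1)$-block tree.
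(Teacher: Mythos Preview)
Your block-decomposition reduction is clean and correct: every hyperedge, being a $K_r$ in $\partial_2(\h)$, lies inside a single block, so splitting at a cut vertex and summing $\lfloor |V_i|/(r-1)\rfloor(k-1)$ with subadditivity of the floor gives the desired inequality. The base case $n\le r$ is also handled. The problem is that this reduction has only moved the entire difficulty into the 2-connected case, and that case is not proved.

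You acknowledge this yourself (``closing this gap is the crux''), but the sketch you give does not close it. Jackson's theorem on the incidence bipartite graph only yields a Berge cycle of length $\ge r$ once $e(\h)>\lfloor(n-1)/(r-1)\rfloor(r-1)$; for $k<r$ you need the conclusion far earlier, and the gap between $(k-1)$ and $(r-1)$ in that factor is exactly the content of the theorem. Your proposed rotation/extension on a longest Berge cycle $C$ is too vague to count as an argument: two internally vertex-disjoint $\partial_2(\h)$-paths from an unused hyperedge $e^\ast$ back to $C$ need not lift to Berge paths that avoid the hyperedges of $C$, and even when they do, rerouting through $e^\ast$ need not increase the length of $C$ (the attachment points on $C$ can be adjacent, the detour can be short, or the new walk can revisit a hyperedge). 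As a small illustration, with $r=5$, $k=4$ and three hyperedges sharing a common pair $\{u,v\}$ but otherwise disjoint, $\partial_2(\h)$ is 2-connected, the longest Berge cycle has length $2$, and no rerouting through the third edge lengthens it; the edge bound holds here for other reasons, but the mechanism you describe simply does not fire. Nothing you wrote forces a long Berge cycle once $e(\h)$ exceeds $\lfloor(n-1)/(r-1)\rfloor(k-1)$.

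The paper takes an entirely different route that sidesteps block decomposition altogether. Lemma~\ref{main} (proved by a careful rotation argument on a longest \emph{semi-path}, not on a cycle) guarantees that any $\BC_{\ge k}$-free multi $r$-graph on more than $r$ vertices contains a set $S$ of $r-1$ vertices with $|N_h(S)|\le k-1$. Removing $S$ drops $n$ by $r-1$ and $e(\h)$ by at most $k-1$, so a one-line induction gives $e(\h)\le\lfloor(n-1)/(r-1)\rfloor(k-1)$ immediately; the uniqueness when $(r-1)\mid(n-1)$ then falls out by tracing equality through the peel-off. If you want to salvage your approach, the work you would have to do in the 2-connected case is comparable in depth to proving Lemma~\ref{main} itself, so you are not actually saving anything by decomposing into blocks first.
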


As a corollary of Theorem \ref{multi} we obtain a version of Theorem \ref{GKL1} with multiple hyperedges

\begin{corollary}\label{P2}  Let $k,n$ and $r$ be positive integer with $2\leq k \leq r$ then \begin{displaymath}
\ex_r^{multi}(n,\BP_{k}) = \floor{\frac{n}{r}}(k-1).
\end{displaymath} 
\end{corollary}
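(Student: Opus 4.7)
The plan is to reduce the corollary to Theorem \ref{multi} via a component-wise analysis. Let $\mathcal H$ be an $r$-uniform multi-hypergraph on $n$ vertices with no Berge path of length $k$, and let $\mathcal H_1, \dots, \mathcal H_c$ denote its connected components, with $n_i$ vertices and $e_i$ hyperedges. The target is the per-component bound $e_i \leq k-1$, from which the corollary follows immediately: every component carrying at least one hyperedge has at least $r$ vertices, so there are at most $\lfloor n/r\rfloor$ such components, and summing gives $e(\mathcal H) = \sum_i e_i \leq \lfloor n/r\rfloor(k-1)$. The matching lower bound is realised by the disjoint union of $\lfloor n/r\rfloor$ blocks, each consisting of $r$ vertices equipped with $k-1$ copies of the unique hyperedge, together with the remaining isolated vertices.

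The key steps for the per-component bound are as follows. First, since every Berge cycle of length $t$ contains a Berge path of length $t-1$, the component $\mathcal H_i$ contains no Berge cycle of length at least $k+1$. Second, any Berge cycle $v_0, e_1, v_1, \dots, v_{k-1}, e_k, v_0$ of length exactly $k$ extends to a Berge path of length $k$ whenever $r > k$: the edge $e_k$ has $r$ vertices, contains $v_0$ and $v_{k-1}$, and at least one of its remaining $r-2$ vertices must lie outside $\{v_0, \dots, v_{k-1}\}$ because $r-2 > k-2$, so $v_0, e_1, \dots, v_{k-1}, e_k, u$ is the desired path. Thus for $k < r$, $\mathcal H_i$ is $\BC_{\geq k}$-free and Theorem \ref{multi} gives $e_i \leq \lfloor (n_i-1)/(r-1)\rfloor (k-1)$. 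Third, to sharpen this to $e_i \leq k-1$, I would use the extremal characterisation in Theorem \ref{multi}: the extremal $\BC_{\geq k}$-free connected hypergraphs are $(r, k-1)$-block trees, and any such tree with two or more blocks already contains a Berge path of length $k$, obtained by walking through the $k-1$ copies of one block's edge to its shared cut-vertex $w$ and then stepping into the adjacent block via one of its edges to a fresh vertex. Hence $\mathcal H_i$ consists of at most a single $(r, k-1)$-block and $e_i \leq k-1$.

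The main obstacle I expect is the boundary case $k = r$: a ``tight'' Berge cycle of length $r$ on exactly $r$ vertices can carry arbitrarily many parallel copies of the unique hyperedge without producing a $\BP_r$, since $r+1$ distinct vertices are simply unavailable, so the extension step in the second paragraph breaks down. A way to sidestep this is to identify one vertex from each component with a common new vertex, which preserves the edge count and turns the component-wise $\BP_k$-freeness into a global $\BC_{\geq k}$-freeness amenable to Theorem \ref{multi}. A second delicate point is extending the ``two-block'' contradiction to non-extremal $\mathcal H_i$, where the bound of Theorem \ref{multi} is not attained; this will likely require a direct longest-Berge-path argument with P\'osa-type rotations, using $|V(P)| \leq k \leq r$ to force any hyperedge meeting the current path to carry a vertex outside $V(P)$, which then enables the desired extension and hence the improved per-component count.
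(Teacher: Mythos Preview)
Your route is quite different from the paper's. The paper does not decompose into components at all: it adds one new vertex $v$ to every hyperedge of $\h$, obtaining an $(r{+}1)$-uniform multi-hypergraph $\h'$ on $n+1$ vertices, argues (exactly as for Corollary~\ref{P1}) that a Berge cycle of length $\ge k$ in $\h'$ yields a Berge path of length $k$ in $\h$, and then reads off $e(\h)=e(\h')\le\lfloor n/r\rfloor(k-1)$ from Theorem~\ref{multi} applied to the $(r{+}1)$-graph $\h'$.

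Your plan has a real obstruction at $k=r$, not merely a technicality. The per-component bound $e_i\le k-1$ is \emph{false} there: a single $r$-set taken with multiplicity $m$ is connected, contains only $r=k$ vertices and hence no Berge path of length $k$, yet has $m$ edges for arbitrary $m$; so no argument that stays inside a component can ever give $e_i\le k-1$. Your suggested fix of ``identifying one vertex from each component with a common new vertex'' is not the paper's construction and does not obviously work either: gluing components at a vertex can create longer Berge paths than existed before, the vertex count goes down rather than up (so Theorem~\ref{multi} would give the wrong denominator), and there is no evident reason the glued hypergraph should be $\BC_{\ge k}$-free. Even for $k<r$, invoking the extremal characterisation in Theorem~\ref{multi} does not deliver $e_i\le k-1$: that characterisation only describes the hypergraphs \emph{attaining} $\lfloor(n_i-1)/(r-1)\rfloor(k-1)$, and says nothing about a connected $\h_i$ with one fewer edge, which for $n_i\ge 2r-1$ is already well above $k-1$. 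The ``second delicate point'' you flag is therefore the entire content of that case and would have to be proved from scratch, whereas the paper's extension trick avoids all of this at once.
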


In fact all these results have essentially the same proof since, these results follow from our Lemma \ref{main}, which to some extent lets us understand the structure of long Berge cycle free hypergraphs.

\begin{lemma}
\label{main}
 Let $r,n$ and $m$ be positive integers, with $n> r$, and let $\h$  be an $n$-vertex $r$-graph 
 which is $\BC_{\geq k}$-free such that every hyperedge has multiplicity at most $m$.
 Then at least one of the following holds.
 \begin{itemize}
           
            \item [i)] There exists $S\subseteq V$ of size $r-1$ such that $\abs{N_{h}(S)} \leq m.$ Moreover, if $m < k-1$ there exists a set $S$ of size $r-1$ such that $N_{h}(S)$ is   $d\leq m$ copies of a hyperedge $h$ and $S\subset h$.

     \item [ii)] There exists $S\subseteq V$ of size $r$ such that $\abs{N_{h}(S)} \leq k-1.$ 
 

\item [iii)] $k =r$, $m < k-1$, and there exists $e \in E(\h)$ such that after removing $e$ from $\h$ the resulting $r$-graph can be decomposed in two $r$-graphs, $\S$ and $\K$ sharing one vertex, such that $\S$ is a $r$-star with at least $r-1$ edges, the shared vertex is in the center of $\S$, $e\cap V(\S)$ is a subset of the center of $\S$ and $v(\K) \geq 2.$
 \end{itemize}

In particular, since no hyperedge can have multiplicity larger than $k-1$, by setting $m = k-1$ we have that there exists a set $S$ of size $r-1$ incident with at most $k-1$ edges. 

 \end{lemma}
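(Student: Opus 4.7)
The plan is to fix a longest Berge path $P = v_0, e_1, v_1, \ldots, e_t, v_t$ in $\h$ (the edgeless case is immediate) and derive the trichotomy from its extremal structure at the endpoints. First I would record the key consequence of maximality: every hyperedge $f$ incident to $v_0$ with $f \notin \{e_1,\ldots,e_t\}$ must satisfy $f \subseteq V(P)$, since otherwise a vertex of $f \setminus V(P)$ could be prepended to $P$; the analogous statement holds at $v_t$. This endpoint localisation is essentially the only property of $P$ that I will really use.

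Next I would focus on the $(r-1)$-set $S_0 := e_1 \setminus \{v_0\}$. If $|N_h(S_0)| \leq m$, then (i) holds; and the refined version when $m < k-1$ follows by choosing $S_0$ at a carefully selected endpoint and observing that any edge $f$ touching $S_0$ with $f$ distinct from $e_1$ as a hyperedge (not merely as a multi-copy) would give an alternate longest Berge path whose endpoint shift contradicts the particular extremal choice. If instead $|N_h(S_0)| > m$, I turn to the $r$-set $e_1$: if $|N_h(e_1)| \leq k - 1$ then (ii) holds, so I may assume $|N_h(e_1)| \geq k$.

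In the remaining case I would run a P\'osa-type rotation from $v_0$. Every edge $f$ through $v_0$ with $f \neq e_1$ lies entirely in $V(P)$, and swapping $e_1$ for $f$ with an appropriate reversal of the initial segment produces another longest Berge path whose new endpoint is an interior vertex of $P$. Iterating yields an accumulating set $T$ of valid alternative endpoints; once $|T|$ reaches $k$, the hypothesis that (ii) fails at the $r$-sets met along $P$ supplies a hyperedge joining some endpoint of $T$ back toward the other end of $P$, closing a Berge cycle of length at least $k$ and contradicting $\BC_{\geq k}$-freeness. The rotation must therefore stall before $|T|$ grows to $k$, and a stall is only compatible with the hypotheses when $k = r$ and $m < k-1$. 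In that stalled regime I would show that every edge through $v_0$ must share a common $(r-1)$-core, producing an $r$-star $\S$ with at least $r-1$ edges, while the part of $\h$ outside $\S$ forms a subhypergraph $\K$ attached to $\S$ at a single center vertex via one bridging edge $e$ whose intersection with $V(\S)$ lies in the center; this matches (iii) exactly.

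The main obstacle will be the bookkeeping in the rotation step under the multiplicity cap $m$. When $m$ is small, two hyperedges at $v_0$ that ostensibly differ might actually be multi-copies of the same underlying set, and this is precisely what bounds the growth of $T$ in the exceptional regime. Carefully distinguishing genuine new rotations from multi-edge repetitions, and showing that failure to reach $k$ fresh endpoints forces the common $(r-1)$-core configuration, is the technical heart of the argument and is what separates the generic case, where (i) or (ii) always applies, from the star-like exception (iii).
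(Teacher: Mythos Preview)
Your plan has a real gap at its very first substantive step. The set $S_0 := e_1 \setminus \{v_0\}$ contains $v_1$ (and possibly further defining vertices $v_2,\dots$ of $P$), and nothing in the maximality of $P$ bounds the degree of such interior vertices; so the dichotomy ``either $|N_h(S_0)|\le m$ or look at $e_1$'' does no real work, because the first alternative essentially never occurs for the reason you want. Likewise, the rotation step as you describe it is not well-posed: if $f\ni v_0$ is \emph{not} an edge of $P$, you cannot ``swap $e_1$ for $f$'' and keep the same length, and in fact when $r\ge k$ one checks that any non-path edge through $v_0$ would already close a Berge cycle of length $\ge k$; while if $f=e_j$ is a path edge, the rotation makes $v_{j-1}$ the new endpoint, but this alone does not produce a growing set $T$ with the cycle-closing property you assert. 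The claim that ``(ii) failing at the $r$-sets along $P$ supplies a closing edge'' is exactly the hard combinatorial content, and it is not derivable from the ingredients you have listed.

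What the paper actually does is structurally different and worth noting. It works with a maximal \emph{semi-path} $e_1,v_1,e_2,\dots,e_t,v_t$ (starting with an edge), sets $U=\{v_1,\dots,v_\ell\}$ and $\F=\{e_1,\dots,e_\ell\}$ with $\ell=\min\{k-1,t\}$, and focuses on the non-defining vertices $e_1\setminus U$, which is nonempty since $|e_1\cap U|\le k-1<r$. These vertices satisfy $N_h(e_1\setminus U)\subseteq\F$ by maximality. Then for every $v_i\in e_1\cap U$ one can reverse an initial segment to bring $e_i$ to the front and conclude $N_h(e_i\setminus U)\subseteq\F$ as well; a further rearrangement handles the case when two such $e_i,e_j$ share a non-defining vertex. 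Collecting all these trapped vertices into a set $A$ gives $|A|\ge |e_1\setminus U|+|e_1\cap U|-1\ge r-1$ with $N_h(A)\subseteq\F$, and the residual case $|A|=r-1$ forces each $|e_i\setminus U|=1$, hence $|U|=r-1$, hence $k=r$, after which a short further analysis extracts the $r$-star $\S$ of (iii). The engine is thus the \emph{non-defining} vertices of the leading hyperedges, not a P\'osa endpoint set; your $S_0$ is the wrong $(r-1)$-set to track.
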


In Section \ref{proofs} we deduce Theorems \ref{k<r}, \ref{k=r} and \ref{multi} from Lemma \ref{main}, as well as their corollaries. We leave the proof of Lemma \ref{main} for Section  \ref{lemma}.


\section{Proof of main results}
\label{proofs}
This section contains two subsections in the first one we prove our main results Theorem \ref{k<r}, Theorem  \ref{k=r} and  Theorem \ref{multi} using Lemma \ref{main}. In the second subsection we prove   their corollaries.

\subsection{Proof of Theorem \ref{k<r}, \ref{k=r} and \ref{multi}}
To obtain the extremal constructions in Theorem \ref{k<r}, first we are going to show that in a $(r+1,k-1)$-block tree for every pair of vertices there exists a Berge path of length $k-1$ joining them, for this we prove the following statement by induction.

\begin{claim}\label{path} Let $r\geq 3$ and $\h_1$ a  multi (not necessarily uniform) hypergraph such that $v(\h_1) = r+1 > e(\h_1) \geq 2$, and every hyperedge $h \in E(\h_1)$, $h \not=V(\h_1)$ has size at least $r$ and multiplicity at most one. Then every pair of vertices of $\h_1$ are join by a Berge path of length $e(\h_1)$. 
\end{claim}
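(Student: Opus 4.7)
The plan is to prove the claim by induction on $s = e(\h_1)$. The key structural observation is that since $|V(\h_1)| = r+1$, every hyperedge $h \neq V(\h_1)$ has size exactly $r$, and the multiplicity-one hypothesis forces the vertices missed by these edges to be distinct. In particular, at most $s \leq r$ vertices are missed by any edge, so at least one ``universal'' vertex (contained in every edge) always exists.

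For the base case $s = 2$, I would directly produce the path $u, e_{\pi(1)}, w, e_{\pi(2)}, v$ by choosing an intermediate $w \in (e_1 \cap e_2) \setminus \{u, v\}$ together with an assignment $\pi$ of edges to positions so that $u \in e_{\pi(1)}$ and $v \in e_{\pi(2)}$. Since each edge excludes at most one vertex and $|e_1 \cap e_2| \geq 2r - (r+1) = r - 1$, the set of candidates for $w$ is non-empty once the endpoint-containment constraints are resolved by a short case analysis on which (if any) of $u, v$ is missed by $e_1$ or $e_2$.

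For the inductive step, assuming the claim for $s$, I would take $\h_1$ with $s + 1 \leq r$ hyperedges and distinct $u, v \in V(\h_1)$, select a hyperedge $e^*$ containing $v$ (which exists because $v$ is missed by at most one edge and there are at least three edges in total), and choose a candidate penultimate vertex $y \in e^* \setminus \{u, v\}$ (non-empty because $|e^*| \geq r \geq 3$). The induction hypothesis applied to $\h_1' := \h_1 \setminus \{e^*\}$---which has $r+1$ vertices, $s$ hyperedges, and still satisfies the structural assumptions---produces a Berge path $P$ from $u$ to $y$ of length $s$ in $\h_1'$, and appending $y, e^*, v$ to $P$ then yields a Berge path of length $s + 1$ from $u$ to $v$ in $\h_1$, provided $v$ does not already occur among the intermediate vertices of $P$.

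The main obstacle is precisely this vertex-avoidance condition, since the induction hypothesis does not by itself forbid $v$ from appearing as an internal vertex of $P$. I expect to resolve this in one of two ways: either by iterating over different choices of $(e^*, y)$ and arguing that at least one choice yields an inductive path missing $v$, or by a local swap argument---if $v = w_i$ for some intermediate position $i$ in $P$, replace $w_i$ by another element of $e_i \cap e_{i+1}$ distinct from the other vertices of $P$, using that $|e_i \cap e_{i+1}| \geq r - 1$ is comfortably larger than the number $s - 1$ of other used intermediates. The borderline configurations where these intersections are tight should be handled by routing the path through the universal vertex identified at the start.
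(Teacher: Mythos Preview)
Your induction on $s=e(\h_1)$ leaves a genuine gap at exactly the point you flag: nothing in the hypothesis lets you force the inductive path $P$ from $u$ to $y$ in $\h_1'$ to avoid $v$, and your proposed repairs do not close it. The local swap needs a vertex in $(e_i\cap e_{i+1})\setminus\{w_0,\dots,w_s\}$, but $|e_i\cap e_{i+1}|$ can be as small as $r-1$ while the path already occupies $s+1$ vertices; when $s\in\{r-2,r-1\}$ this set can be empty (e.g.\ $r=4$, $V=\{1,\dots,5\}$, $e_1=\{1,2,3,4\}$, $e_2=\{1,2,3,5\}$, path vertices $\{1,2,3,5\}$). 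Iterating over $(e^\ast,y)$ does not help either, since the hypothesis only promises \emph{some} path for each choice, not a favourable one, and ``routing through the universal vertex'' is not an argument as stated.

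The paper sidesteps this entirely by inducting on $r$ rather than on $s$: given endpoints $v,u$, it picks an edge $h\ni v$ and a vertex $w\in h\setminus\{v,u\}$, then deletes both the edge $h$ \emph{and the vertex $v$} (removing $v$ from every remaining hyperedge). The resulting hypergraph $\h_2$ has $r$ vertices and $e(\h_1)-1$ edges, each of size at least $r-1$, so the claim applies with $r$ replaced by $r-1$; the inductive $w$--$u$ path in $\h_2$ cannot contain $v$ because $v$ is no longer a vertex, and prepending $v,h,w$ finishes. This is precisely why the claim is stated for non-uniform hypergraphs with edges of size at least $r$ rather than for $r$-uniform ones: that generality is what makes the vertex-deletion step legal. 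Your setup never exploits this, which is why you are stuck with the avoidance problem.
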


\begin{proof}
The proof is by induction on $r$. The case where $r = 3$ is simple to check, as well as the case when $e(\h_1) = 2$, since every edge contain all but at most one vertex. So suppose $r > 3$ and $e(\h_1) > 2$. 
Let $v,u$ be to distinct vertices, take any hyperedge $h$ containing $v$, then choose $w\in \h \backslash \{v,u\}$, consider $\h_2$ obtain by removing $v$ and $h$ from $\h_1$ and by deleting $v$ from the remaining hyperedges, then $\h_2$ satisfy the conditions of the claim, hence there exists a Berge path of length $e(\h_2) = e(\h_1)-1$ joining $w$ and $u$, we can extend this path with $h$ to be a Berge path of length $e(\h_1)$ joining $v$ and $u$.
\end{proof}

Therefore we proved that in a  $(r+1,k-1)$-block tree for every pair of vertices from the same block there exists a Berge path of length $k-1$ joining them hence the statement trivially holds for every pair of vertices too since $(r+1,k-1)$-block tree is connected hypergraph.
\begin{proof}[Proof of Theorem \ref{k<r}]
For the lower bound we can observe that a $(r+1,k-1)$-block tree on $ar+1$ vertices is a $\BC_{\geq k}$-free graph with $a(k-1)$ edges, for $n\in\{ar+1,ar+2,\dots,(a+1)r-1\}$ this proves the lower bound, if $n = (a+1)r$ add an extra edge containing $r-1$ new vertices to this construction and we will get a desired lower bound.

For the upper bound, let $\h$ is an $r$-uniform, $n$-vertex,  hypergraph,   without a Berge cycle of length at least $k$. The proof is by induction on the number of vertices. 
The theorem trivially holds for $n\leq r$. So suppose  $n>r$ and that the theorem holds for any graph with less than $n$ vertices, by Lemma \ref{main} there exists a set $S \subseteq V$ such that either $\abs{S} = r-1$ and $\abs{N_h(S)} = 1$ or $\abs{S} = r$ and $\abs{N_h(S)} = k-1$. Let $\h'$ be the graph induce by $V'= V\backslash S$.  Then either 
\begin{displaymath} e(\h) \leq 1  + e(\h')  \leq 1 + \floor{\frac{n-r}{r}}(k-1)+\mathbb{1}_{r \mathbb{N}^*}(n-r+1) \leq \floor{\frac{n-1}{r}}(k-1)+\cdot\mathbb{1}_{r \mathbb{N}^*}(n), \mbox{ or}\end{displaymath} 
\begin{displaymath} e(\h) \leq (k-1) + e(\h') \leq (k-1) + \floor{\frac{n-(r-1)-1}{r}}(k-1) + \mathbb{1}_{r \mathbb{N}^*}(n-r) = \floor{\frac{n-1}{r}}(k-1) + \mathbb{1}_{r \mathbb{N}^*}(n).
\end{displaymath}


From the above calculations equality holds, only when $\abs{S}=r$, $\abs{N_h(S)}=k-1$ and $e(\h')=\floor{\frac{n-r-1}{r}}(k-1)$ or  $r|n$, $\abs{S}=r-1$, $\abs{N_h(S)}=1$ and $e(\h') = \floor{\frac{n-r-1}{r}}(k-1) +1$.
If $r|n-1$, we prove the only extremal hypergraph is a  $(r+1,k-1)$-block tree. We have $\abs{S} = r$ and by induction $\h'$ is a $(r+1,k-1)$-block tree. For any hyperedge $h$ incident with $S$ we have that $\abs{h \cap V'} \leq 1$, otherwise we have a Berge cycle of length at least $k$ in $\h$, a contradiction, since any two vertices of $V'$ are joined by a $k-1$ length Berge path in $\h'$. If there exists two hyperedges $h$, $h'$ incident with $S$ such that, there exists two distinct vertices $v$, $v' \in V'$, such that  $v \in h$ and $v' \in h'$ then both $h\backslash\{v\}$ and $h'\backslash\{v'\}$ have $r-1$ elements in $S$, then these hyperedges must intersect in a vertex $x$, $x\in S$. So $v, h, x, h', v'$ together with a Berge path of length at least $k-1$  joining $v$ to $v'$ in $\h'$ is a Berge cycle of length at least $k+1$. Therefore every edge in  $N_h(S)$ is either $S$ or intersect the same vertex $v$ of $V'$, hence $\h$ is a $(r,k-1)$-block tree.
\end{proof} 


\begin{remark} If $\h$ is a $n$-vertex multi $r$-graph in which each edge has multiplicity at most $m\leq k-1$ and contains no Berge cycle of length at least $k$, then Lemma \ref{main} implies  $e(\h) \leq \max\{a(k-1) + bm:ar + b(r-1) < n\},$ this holds for all $k\geq2$. 
\end{remark}

\begin{proof}[Proof of Theorem \ref{multi}] This theorem follows by induction in the same way as Theorem \ref{k<r} since we can always find a set $S$ of size $r-1$ incident with at most $k-1$ edges.
\end{proof}

\begin{proof}[Proof of Theorems \ref{k=r}]

We will assume by induction that Theorem  \ref{k=r} holds for $n'<n$. Note that for $n'>2$, $e(\h) \leq n'-2$ and equality holds only for $\S_{n'}^{(3)}$ when $r=3$, or a $(r+1,r-1)$-block when $n' = r+1$, in particular equality only holds for connected hypergraphs. Applying Lemma \ref{main}, one of  $i)$,  $ii)$ or  $iii)$ must hold.

If $iii)$ holds in Lemma \ref{main}, let $\S$ and $\K$ be the given decomposition after removing the hyperedge $e$. 
Let $v$ be the only vertex in $V(\S) \cap V(\K)$, $u \in e \cap V(\S)$ and $w \in e\cap(\K)$, with both $u,w$ different from $v$. If $v(\K) \geq 3$, then $e(\h) = e(\S) + e(\K) + 1 \leq v(\S)-(r-1)+v(\K)-2+1 = n-(r-1)$, but equality is not possible, since by connectivity of $\K$ there is a Berge path from $w$ to $v$ in $\K$ and we have a Berge path of length $r-2$ in $S$ from $v$ to $u$, finally we can use the hyperedge $e$ to connect $u$ to $w$, we get a Berge path of length at least $k$, a contradiction. So $\h$ has $n-(r-1)$ edges only if $v(\K) = 2$, therefore $e$ contains the center of $S$ and the only vertex of $V(\K)\backslash \{v\}$, hence $\h=\S_n^{(r)}$. Finally we have $e(\h) \leq n-(r-1)$ and equality holds when $\h =\S_n^{(r)}$.


If $\floor{\frac{n-1}{r}}(r-1) \geq n-r+1$ then either $iii)$ in Lemma \ref{main} holds and $e(\h)\leq n-(r-1)$, or the  proof of extremal number follows by induction in the similar way as Theorem \ref{k<r}.

If $n-r+1 > \floor{\frac{n-1}{r}}(r-1)$, and $i)$  holds in Lemma \ref{main} then we have $e(\h) < n-(r-1)$ since $n'-r+1 \geq \floor{\frac{n'-1}{r}}(r-1)$ for $n'=n-(r-1)$, a contradiction. If $n-r+1 > \floor{\frac{n-1}{r}}(r-1)$, and $ii)$  holds in Lemma \ref{main} then we have $e(\h) < n-(r-1)$ since $n'-r+1 \geq \floor{\frac{n'-1}{r}}(r-1)$ for $n' = n-r$, a contradiction.

If  $n-r+1 > \floor{\frac{n-1}{r}}(r-1)$  then $iii)$ should hold in Lemma \ref{main}, hence  $\h =\S_n^{(r)}$. 

Suppose $n-r+1>\floor{\frac{n-1}{r}}(r-1)$. If $i)$  holds in Lemma \ref{main} then we have $e(\h) < n-(r-1)$ since $n'-r+1 \geq \floor{\frac{n'-1}{r}}(r-1)$ for $n'=n-(r-1)$, a contradiction.  If $ii)$  holds in Lemma \ref{main} then we have $e(\h) < n-(r-1)$ since $n'-r+1 \geq \floor{\frac{n'-1}{r}}(r-1)$ for $n' = n-r$, which is also a contradiction. Therefore $iii)$  holds in Lemma \ref{main}, hence  $\h =\S_n^{(r)}$. 
\end{proof}

\subsection{Proof of Corollaries \ref{P1} and \ref{P2}}

\begin{proof}[Proof of Corollary \ref{P1}]
Let $\h$ be an $n$-vertex $r$-graph containing no Berge path of length $k$. Define a $(r+1)$-graph $\h'$ by adding a new vertex $v$ to the vertex set of $\h$ and extending every hyperedge of $\h$ with $v$.

If $\h'$ is $\BC_{\geq k}$-free, then from Theorem \ref{k<r}, we have 

\begin{displaymath}
e(\h) = e(\h') \leq \floor{\frac{n+1-1}{r+1}}(k-1) + \mathbb{1}_{(r+1) \mathbb{N}^*}(n+1)= \floor{\frac{n}{r}}(k-1) + \mathbb{1}_{(r+1) \mathbb{N}^*}(n+1).
\end{displaymath}

If $\h'$ contains a copy of a Berge cycle   $v_1,h_1,v_2\dots,h_{\ell-1},v_\ell, h_{\ell}, v_1$, of length $\ell$, for some $\ell \geq k$. If $v$ is one of the defining vertices, suppose without loss of generality $v=v_1$, and let $h_i'= h_i\backslash\{v\}$ for each $i=1,2\dots\ell$ then $\abs{(h_1' \cup h_k') \backslash \{v_2,\dots, v_{k}\}} \geq r+1-(k-1) \geq 2$ and that set intersects both $h'_1$ and $h'_k$ hyperedges. Therefore we can find two distinct vertices $u \in h'_1$ and $u' \in h'_k$ different from all $v_i$, $i \in \{1,2,\dots, k\}$ then $u,h'_1 ,v_2,h'_2,v_3,\dots,h'_{k-1},v_k, h'_k, u'$ is a Berge path of length $k$ in $\h$, a contradiction. If $v$ is not one of the defining vertices, then similar argument leads us to contradiction.

\end{proof}

\begin{proof}[Proof of Corollary \ref{P2}] This follows in a similar way as the previous corollary, by constructing a $\BC_{\geq k}$-free $r$-multi-graph $\h'$.\\
Hence, by Theorem \ref{multi},  $\displaystyle e(\h) = e(H') \leq \floor{\frac{n+1-1}{r+1-1}}(k-1) = \floor{\frac{n}{r}}(k-1).$
\end{proof}
\section{Proof of  Lemma \ref{main}}\label{lemma}



\begin{definition} 
A \emph{semi-path} of length $t$ in a hypergraph, is an alternating sequence of distinct hyperedges and vertices,  $e_1, v_1, e_2, v_2, \dots, e_t, v_t$ (starting with a hyperedge and ending in a vertex) such that, $v_1 \in e_1$ and $v_{i-1},v_i \in e_i$, for $i =2,3,\dots t$.  
\end{definition}


Let $r\geq k \geq 3$ be fix integers and let $\h$ be a $\BC_{\geq k}$-free  multi $r$-graph,  
consider a  semi-path  $P =  e_1, v_1, e_2, v_2, \dots, e_t, v_t$ of maximal length. Consider $P'$ the semi-path $e_1v_1e_2,v_2,\dots,e_\ell,v_t$ obtained from the first $\ell$ vertices and hyperedges of $P$, where $\ell = \min\{k-1,t\}$, let $\F=\{e_1, e_2, \dots, e_l\}$ and $U=\{v_1, v_2, \dots, v_l\}$, the defining vertices and hyperedges of this path. 
Note that  $|e_1 \cap U| \leq k-1 < r$, so  $e_1 \backslash U \not = \emptyset$.

First we will show that any vertex, from $e_1 \cap U$, is only incident with the defining hyperedges in $\F$.

\begin{lemma}\label{first hyperedge}
Suppose $w \in e_1 \backslash U$, then $N_h(w) \subseteq \F$. Hence $N_h(e_1\backslash U) \subseteq \F$.
\end{lemma}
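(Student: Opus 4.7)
My plan is to argue by contradiction. Suppose some vertex $w \in e_1 \backslash U$ is incident with a hyperedge $e \notin \F$; I will exhibit either a Berge cycle of length at least $k$ in $\h$ or a semi-path strictly longer than $P$, contradicting either the $\BC_{\geq k}$-freeness of $\h$ or the maximality of $P$.

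The first step is to reduce to the case $w \notin \{v_1,\dots,v_t\}$. Because $w \notin U = \{v_1,\dots,v_\ell\}$, if $w = v_j$ then $j > \ell$, which is only possible when $\ell = k-1$ and $j \geq k$. In that situation the sequence
\begin{equation}
v_1, e_2, v_2, \dots, e_j, v_j = w, e_1, v_1
\end{equation}
is a Berge cycle of length $j \geq k$: the vertices $v_1,\dots,v_j$ and the hyperedge-copies $e_1,\dots,e_j$ are distinct as part of the semi-path, and the cycle closes because $w \in e_1$. This already contradicts the hypothesis, so henceforth we may assume $w$ is distinct from every defining vertex of $P$.

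Next I would split according to whether $e$ is a tail hyperedge of $P$. If $e = e_i$ for some $i > \ell$, then $i \geq k$ and
\begin{equation}
w, e_1, v_1, e_2, v_2, \dots, v_{i-1}, e_i, w
\end{equation}
is a Berge cycle of length $i \geq k$ closing via $w \in e_1 \cap e_i$. Otherwise $e$ is, as a copy, distinct from every hyperedge appearing in $P$, and prepending the pair $e,w$ to $P$ gives the semi-path
\begin{equation}
e, w, e_1, v_1, e_2, v_2, \dots, e_t, v_t
\end{equation}
of length $t+1$, violating the maximality of $P$. The inclusion $N_h(e_1 \backslash U) \subseteq \F$ then follows by applying this to each vertex of $e_1 \backslash U$ separately.

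The step I expect to require the most care is the multi-hypergraph bookkeeping: ``distinct hyperedges'' has to mean distinct copies throughout, so one must verify that the Berge cycles exhibited above genuinely involve distinct hyperedge-copies rather than some silently repeated copy. Once that convention is pinned down, the case analysis above is exhaustive and each case yields the desired contradiction, so no such $e$ can exist.
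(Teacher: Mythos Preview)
Your proposal is correct and follows essentially the same two-case argument as the paper: a hyperedge of $P$ beyond $\F$ containing $w$ closes a Berge cycle of length at least $k$, while a hyperedge outside $P$ lets one prepend $e,w$ to obtain a longer semi-path. The only cosmetic difference is that you separate out the possibility $w=v_j$ for $j>\ell$ as a preliminary reduction, whereas the paper absorbs this into the first case by choosing $j$ minimal; both treatments are equivalent.
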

\begin{proof}
If $w$ is incident with a hyperedge of $P$ not in $\F$, let $j\geq k$ be the smallest index such that $w$ is incident with $e_j$, then $v_1,e_2,v_2\dots,v_{j-1},e_j,w,e_1,v_1$ is a Berge cycle of length at least $k$, a contradiction, 
If $w$ is incident with an edge $e$ not in the Berge path $P$, then $e,w,P$ is a longer semi-path, a contradiction to the maximality of $P$.

For simplicity Lemma \ref{first hyperedge} was stated and proved for a maximal semi-path $\P$, but it similarly holds for every maximal semi-path. Hence we may apply Lemma \ref{first hyperedge} for other maximal semi-paths.
\end{proof}


For each defining vertex $v_i$, $v_i\in e_1\cap U$, we find another maximal semi-path by rearranging $P$, starting at $e_i$, without changing  the set of the first $\ell$ vertices and hyperedges.

\begin{lemma}\label{missing}
If for some $i$ we have that $v_i\in e_1\cap U$,  then $N_h(e_{i}\backslash U) \subseteq  \F$.
\end{lemma}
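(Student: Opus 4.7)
The plan is to reduce Lemma \ref{missing} to Lemma \ref{first hyperedge} by producing, for each $v_i \in e_1 \cap U$, a new maximal semi-path $Q$ whose first hyperedge is $e_i$ and whose first $\ell$ hyperedges and first $\ell$ vertices still form the sets $\F$ and $U$. The case $i=1$ is already Lemma \ref{first hyperedge}, so we may assume $i \geq 2$.

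We rearrange $P$ by pivoting at the vertex $v_i$. Since $v_i \in e_1$ by hypothesis, we may reverse the initial segment $e_1, v_1, e_2, \dots, v_{i-1}, e_i$ and then continue along the tail of $P$ from $v_i$; explicitly, set
\[
Q := e_i,\, v_{i-1},\, e_{i-1},\, v_{i-2},\, \dots,\, e_2,\, v_1,\, e_1,\, v_i,\, e_{i+1},\, v_{i+1},\, \dots,\, e_t,\, v_t.
\]
All consecutive incidences in $Q$ are inherited from those of $P$ together with the single new use of $v_i \in e_1$, and the vertices and hyperedges of $Q$ are a permutation of those of $P$, hence pairwise distinct. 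Thus $Q$ is a semi-path of the same length as $P$ and is therefore also maximal. Its first $\ell$ hyperedges form the set $\{e_i, e_{i-1}, \dots, e_1, e_{i+1}, \dots, e_\ell\} = \F$, and its first $\ell$ vertices form $\{v_{i-1}, \dots, v_1, v_i, v_{i+1}, \dots, v_\ell\} = U$.

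As explicitly noted at the end of the proof of Lemma \ref{first hyperedge}, that argument applies to every maximal semi-path, not only to the one fixed at the start. Applying it to $Q$, with $e_i$ in the role of the first hyperedge, yields $N_h(e_i \backslash U) \subseteq \F$, which is precisely the claim. The only genuine obstacle is bookkeeping: checking that the reversed-then-grafted sequence $Q$ really is a valid maximal semi-path and that its first $\ell$ hyperedges and vertices still coincide (as sets) with $\F$ and $U$. Once this is done, the conclusion is a black-box appeal to Lemma \ref{first hyperedge}.
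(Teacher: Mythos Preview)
Your proposal is correct and takes essentially the same approach as the paper: you construct exactly the same rearranged semi-path $Q = e_i, v_{i-1}, \dots, e_2, v_1, e_1, v_i, e_{i+1}, \dots, e_t, v_t$ and then invoke Lemma \ref{first hyperedge}. Your version is simply more explicit about why $Q$ is a valid maximal semi-path and why its first $\ell$ hyperedges and vertices coincide with $\F$ and $U$.
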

\begin{proof}
Consider the semi-path $e_i,v_{i-1},e_{i-1},v_{i-2},\dots,e_2,v_1,e_1,v_i,e_{i+1},v_{i+1},\dots,e_t,v_t$, this semi-path has length $t$, so it is maximal, then $N_h(e_{i}\backslash U) \subseteq  \F$ follows from Lemma \ref{first hyperedge} for this path.
\end{proof}

\begin{lemma} \label{extra} If there are two vertices $v_i,v_j \in e_1 \cap U$, with $i > j$ such that $(e_i \cap e_j) \backslash U \not = \emptyset,$ then  $N_h(v_{i-1})\subseteq \F$ and $N_h(v_{j})\subseteq \F$. 
\end{lemma}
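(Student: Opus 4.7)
The plan is to choose any $w \in (e_i \cap e_j) \backslash U$ (which exists by hypothesis), and then suppose for contradiction that $v_{i-1} \in f$ for some $f \notin \F$; the argument for $v_j$ will follow by a symmetric construction.

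First I will pin down $w$: applying Lemma \ref{missing} to $v_i \in e_1 \cap U$ gives $N_h(e_i \backslash U) \subseteq \F$, hence $N_h(w) \subseteq \F$. From this I conclude $w \notin V(P)$: if $w = v_p$ with $p \leq k-1$ then $w \in U$, contradicting the choice of $w$; and if $p \geq k$ then $w = v_p \in e_p \notin \F$, again contradicting $N_h(w) \subseteq \F$. Hence $w$ is a fresh vertex distinct from every $v_p$, which will make all vertex-distinctness checks below immediate.

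I now split into two cases according to whether $f$ appears in $P$. If $f = e_s$ for some $k \leq s \leq t$, I will construct the Berge cycle
\begin{align*}
v_{i-1}, e_{i-1}, v_{i-2}, \ldots, e_{j+1}, v_j, e_1, v_1, e_2, \ldots, e_{j-1}, v_{j-1}, e_j, w, e_i, v_i, e_{i+1}, \ldots, v_{s-1}, e_s, v_{i-1}.
\end{align*}
The essential trick is the \emph{shortcut} $e_j \to w \to e_i$: because $w$ lies in both $e_i$ and $e_j$, we can bypass the stretch $v_j, e_{j+1}, \ldots, e_{i-1}, v_{i-1}$ in $P$ and instead route those edges (together with $e_1$) along the opposite arc from $v_{i-1}$ back down to $v_j$ and from $v_j$ through $v_1, \ldots, v_{j-1}$. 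A direct count shows that the cycle uses each of $e_1, \ldots, e_s$ exactly once and visits the $s-1$ defining vertices $v_1, \ldots, v_{s-1}$ together with $w$; by the previous paragraph these $s$ vertices are distinct, so the cycle has length $s \geq k$, violating $\BC_{\geq k}$-freeness.

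If instead $f \notin \{e_1, \ldots, e_t\}$, I will perform the identical rerouting but prepend $f$ at the start instead of closing the cycle, yielding the semi-path
\begin{align*}
f, v_{i-1}, e_{i-1}, \ldots, e_{j+1}, v_j, e_1, v_1, \ldots, e_{j-1}, v_{j-1}, e_j, w, e_i, v_i, e_{i+1}, \ldots, e_t, v_t
\end{align*}
of length $t+1$, contradicting the maximality of $P$. The main bookkeeping obstacle is the boundary cases $j=1$ (where $e_j$ coincides with $e_1$ and the segment $e_2, \ldots, e_{j-1}$ is empty) and $j = i - 1$ (where the descending segment $e_{i-1}, \ldots, e_{j+1}$ is empty); in each the displayed sequence collapses naturally and still yields the correct count of $s$ distinct edges in the cycle and $t+1$ distinct edges in the semi-path. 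The symmetric conclusion $N_h(v_j) \subseteq \F$ uses essentially the same cycle and semi-path, now read as beginning and ending at $v_j$ and closed via the assumption $v_j \in e_s$ or $v_j \in f$ respectively.
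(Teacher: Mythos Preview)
Your proof is correct and is essentially the same construction as the paper's: the semi-path you build in the ``$f\notin\{e_1,\dots,e_t\}$'' case is exactly the paper's rearranged maximal semi-path $P_1$ with $f$ prepended, and your symmetric case corresponds to the paper's $P_2$. The only difference is packaging. The paper simply writes down $P_1$ and $P_2$ and invokes Lemma~\ref{first hyperedge} on each (noting that $v_{i-1}$, respectively $v_j$, lies in the first hyperedge but not among the defining vertices), whereas you unroll the content of Lemma~\ref{first hyperedge} inline by splitting into the ``$f=e_s$ with $s\geq k$'' and ``$f$ new'' cases. Your extra paragraph verifying that $w$ is genuinely fresh (not equal to any $v_p$) is a point the paper leaves implicit but which is indeed needed for $P_1,P_2$ to be valid semi-paths, so that is a small bonus of your presentation. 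One minor remark: your appeal to symmetry for $N_h(v_j)\subseteq\F$ is not literally the \emph{same} cycle read from a different basepoint; the analogous construction routes through $e_1$ via the hypothesis $v_i\in e_1$ (rather than $v_j\in e_1$), giving the sequence $v_j,e_{j+1},\dots,v_{i-1},e_i,w,e_j,v_{j-1},\dots,v_1,e_1,v_i,e_{i+1},\dots$ --- it would be worth writing this out rather than calling it the same object.
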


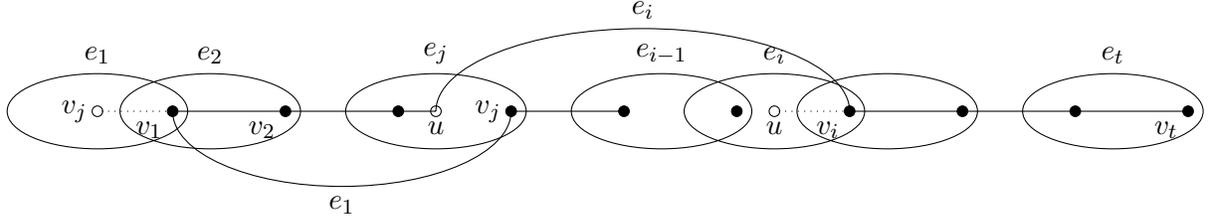
\begin{figure}[t]
\begin{center}
\begin{tikzpicture}[scale=1]


\draw (0,0) arc(90:450:1.2cm and 0.5cm) (1.5,0) arc(90:450:1.2cm and 0.5cm)  (4.5,0) arc(90:450:1.2cm and 0.5cm)  (7.5,0) arc(90:450:1.2cm and 0.5cm) (9,0) arc(90:450:1.2cm and 0.5cm) (10.5,0) arc(90:450:1.2cm and 0.5cm) (13.5,0) arc(90:450:1.2cm and 0.5cm);

\draw (0,0) node[above]{$e_1$} (1.5,0) node[above]{$e_2$} (3,0) node[above]{$ $} (4.5,0) node[above]{$e_j$}   (7.5,0) node[above]{$e_{i-1}$} (9,0) node[above]{$e_i$} (13.5,0) node[above]{$e_t$};

\filldraw (1,-.5) circle (2pt) node[below left]{$v_1$} (2.5,-.5) circle (2pt) node[below left]{$v_2$} (4,-.5) circle (2pt) (5.5,-.5) circle (2pt) node[left]{$v_j$} (7,-.5) circle (2pt) (8.5,-.5) circle (2pt) (10,-.5) circle (2pt) node[below left]{$v_i$} (11.5,-.5) circle (2pt) (13,-.5) circle (2pt) (14.5,-.5) circle (2pt) node[below left]{$v_t$};

\draw (0,-.5) circle (2pt) node[left]{$v_j$}  (4.5,-.5) circle (2pt) node[below]{$u$} (9,-.5) circle (2pt) node[below]{$u$};



\draw (14.5,-.5) -- (10,-.5) arc(0:180:2.75 cm and 1.1cm)   (4.5,-.5) -- (1,-.5) arc(-180:0: 2.25cm and 1cm)   (5.5,-.5)-- (7,-.5);
\draw[dotted] (10,-.5) -- (9,-.5)  (1,-.5) -- (0,-.5);

\draw (3.25,-1.5) node[below]{$e_1$};
\draw (7.25,0.6) node[above]{$e_i$};

\end{tikzpicture}
\caption{Semi-path $P_1$ in the proof of Lemma \ref{extra}}
\label{spath}
\end{center}
\end{figure}

\begin{proof}
Fix $u \in (e_i \cap e_j) \backslash U$ and consider the following maximal length semi-paths (see Figure \ref{spath})\\
$P_1 = e_{i-1},v_{i-2},e_{i-2},v_{i-3},\dots,e_{j+1},v_{j},e_1v_1,e_2,v_2,\dots,v_{j-1},e_j,u,e_{i},v_i,e_{i+1},v_{i+1},\dots,e_{t},v_t,$ and \\
$P_2 = e_{j+1},v_{j+1},e_{j+2},v_{j+2},\dots,v_{i-1},e_{i},u,e_{j}v_{j-1},e_{j-1},v_{j-2},\dots,e_2,v_1,e_1,v_i,e_{i+1},v_{i+1},\dots,e_t,v_t.$ 
Applying Lemma \ref{first hyperedge} for a maximal semi-path $P_1$ and $P_2$, we get $N_h(v_{i-1})\subseteq \F$ and $N_h(v_{j+1})\subseteq \F$. \end{proof}


Let $d \leq m$ be an integer such that $e_1 = e_2 = \cdots = e_d \not= e_{d+1}$. 
\begin{claim}
\label{simple}
If $e_1 \cap U = \{v_1,v_2,v_3,\dots,v_d\}$ then either $e_1\backslash \{v_d\}$ is incident with $d$,  $d\leq m$, hyperedges or there exists a set $S$ of size $r$ such that $N_h(S) \subseteq \F$. In particular if $e_1 \cap U = \{v_1,v_2,v_3,\dots,v_d\}$ then Lemma \ref{main} holds too.
\end{claim}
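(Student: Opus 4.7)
My plan is to first use the earlier lemmas to pin down the neighborhoods of almost every vertex in $e_1$. Concretely, since $\{v_1,\dots,v_d\}\subseteq e_1\cap U$ and no later defining vertex lies in $e_1$ by hypothesis, Lemma \ref{first hyperedge} gives $N_h(w)\subseteq\F$ for every $w\in e_1\backslash U$. For each $i\in\{1,\dots,d-1\}$ I would apply Lemma \ref{extra} to the pair $(v_d,v_i)$: both lie in $e_1\cap U$, with $i<d$, and $(e_d\cap e_i)\backslash U=e_1\backslash U$ is non-empty since $|e_1|=r>k-1\geq d$. This yields $N_h(v_i)\subseteq\F$ for every $i<d$. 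Combined, $N_h(e_1\backslash\{v_d\})\subseteq\F$.

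Next I would do a case analysis on $|N_h(e_1\backslash\{v_d\})|$. The $d$ copies of $e_1$ are always incident with the $(r-1)$-set $e_1\backslash\{v_d\}$. If $|N_h(e_1\backslash\{v_d\})|=d$, those copies are the only incident edges, which matches the first alternative of the claim and option~$(i)$ of Lemma \ref{main} with $S=e_1\backslash\{v_d\}$. Otherwise some $e_j\in\F$ with $d+1\leq j\leq k-1$ shares a vertex $u$ with $e_1\backslash\{v_d\}$; I would then try $S=e_1$ (of size $r$). Since $N_h(e_1\backslash\{v_d\})\subseteq\F$ already, it only remains to establish $N_h(v_d)\subseteq\F$. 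Note that the boundary value $d=k-1$ forces the range for $j$ to be empty, so Case~(b) only occurs when $d\leq k-2$; in particular $p-1\geq k-1>d$ below, which prevents duplicate vertices.

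To show $N_h(v_d)\subseteq\F$, suppose for contradiction that an edge $e^*$ contains $v_d$ and lies outside $\F$. If $e^*$ lies entirely outside $P$, I would splice $e^*$ together with a new vertex of $e^*\backslash V(P)$ into a rearrangement of $P$ to produce a semi-path strictly longer than $P$, contradicting maximality. Otherwise $e^*=e_p$ for some $p\geq k$, and I would build a Berge cycle by chaining together the shortcut $u\in e_1\cap e_j$, the $d$ copies of $e_1$ (used as a path of length $d$ inside $e_1$), the backbone $e_{d+1},\dots,e_{p-1}$ of $P$, and the edge $e^*=e_p$. A representative cycle in the sub-case $u\in e_1\backslash U$ is
\begin{equation*}
u,\,e_d,\,v_{d-1},\,e_{d-1},\,\dots,\,v_1,\,e_1,\,v_d,\,e_p,\,v_{p-1},\,\dots,\,e_{j+1},\,v_j,\,e_j,\,u,
\end{equation*}
of length $d+p-j+1$; the sub-case $u=v_i$ with $1\leq i\leq d-1$ admits an analogous construction treating $u$ as a defining vertex of $e_1$.

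The crux is making this cycle length strictly exceed $k-1$ in every sub-case. When $d$ is small and $j,p$ are close to $k-1,k$ respectively, the basic cycle above shrinks to length $d+2$, which is insufficient when $d\leq k-3$. In those corner cases I would enrich the cycle by continuing past $e_p$ along later edges of $P$ (exploiting any further defining vertices forced to lie in $e^*$, or additional shortcuts in $\F$ implied by the hypothesis $|N_h(e_1\backslash\{v_d\})|>d$) or, failing that, replace $S=e_1$ by $S=(e_1\backslash\{v_d\})\cup\{x\}$ for a vertex $x\notin e_1$ whose neighborhood is forced into $\F$ by an auxiliary Berge-cycle argument applied to a vertex such as $v_{d+1}$ or $v_j$. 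Showing that one of these moves always succeeds in Case~(b) is the main technical obstacle of the proof.
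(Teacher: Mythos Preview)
Your first step, establishing $N_h(e_1\setminus\{v_d\})\subseteq\F$, is correct, though the paper reaches it more directly: since $e_1=e_2=\cdots=e_d$ as hyperedges, each $v_i$ with $i<d$ can be swapped with any vertex of $e_1\setminus U$ in the semi-path, so Lemma~\ref{first hyperedge} applies to every vertex of $e_1\setminus\{v_d\}$ at once, without invoking Lemma~\ref{extra}.

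The genuine gap is precisely the one you flag. In your Case~(b) the cycle
\[
u,\,e_d,\,v_{d-1},\dots,e_1,\,v_d,\,e_p,\,v_{p-1},\dots,e_j,\,u
\]
has length $d+p-j+1$; with $p=k$, $j=k-1$ and $d\leq k-3$ this is only $d+2<k$. Trying to prove $N_h(v_d)\subseteq\F$ is the wrong goal: nothing in the hypotheses forces it. Your fallback of replacing $v_d$ by some $x\notin e_1$ is the right instinct, but the correct vertex is $v_{j-1}$ (not $v_{d+1}$ or $v_j$), and the tool is a semi-path rearrangement, not a Berge-cycle argument.

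Here is the paper's one-line fix. Take $w\in e_1\setminus\{v_d\}$ incident with some $e_j$, $d<j\leq\ell$; by the swap above we may assume $w\in e_1\setminus U$. Form the rearranged semi-path
\[
P'=e_{j-1},\,v_{j-2},\,e_{j-2},\,\dots,\,e_2,\,v_1,\,e_1,\,w,\,e_j,\,v_j,\,e_{j+1},\,\dots,\,e_t,\,v_t.
\]
This has length $t$, hence is maximal, and $v_{j-1}$ lies in its first hyperedge $e_{j-1}$ but is \emph{not} one of its defining vertices. Lemma~\ref{first hyperedge} applied to $P'$ gives $N_h(v_{j-1})\subseteq\F$ immediately, so $S=(e_1\setminus\{v_d\})\cup\{v_{j-1}\}$ is a set of size $r$ with $N_h(S)\subseteq\F$. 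No cycle-length bookkeeping is needed.
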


\begin{proof}
First note that the vertices $v_1,v_2,\dots,v_{d-1}$ can be exchanged with the vertices of $e_1 \backslash U \not= \emptyset$, hence from the Lemma \ref{first hyperedge}, we have $N_{h}(e_1\backslash \{v_d\}) \subseteq \F$.
Suppose $w \in e_1 \backslash \{v_d\}$ is incident with a hyperedge $e_j$, $l\geq j>d$, we may assume $w \in e_1 \backslash U$, then the semi-path $P' = e_{j-1},v_{j-2},e_{j-2},v_{j-3},\dots$ $\dots,e_2,v_1,e_1,w,e_j,v_j,\dots,e_t,v_t$ with the maximal length. Since $v_{j-1}$ is a non defining vertex in the first hyperedge of a maximal semi-path $P'$, applying Lemma \ref{first hyperedge} to $P'$,  we have that $N_h(v_{j-1}) \subseteq \F$, therefore the set $(v_1\backslash \{v_d\}) \cup \{v_{j-1}\}$ is a set of  $r$  vertices incident with at most $k-1$ hyperedges from $\F$. Otherwise, if there is no such $w$ then we have a set of $r-1$ vertices, $e_1 \backslash\{v_d\}$ incident with at most $m$ hyperedges.
\end{proof}
From here we may assume that $\abs{e_1 \cap U} > d$. Let $e_1 \cap U = \{v_{i_0},v_{i_1},v_{i_2},\dots,v_{i_s}\}$, where $1=i_0<i_1 < i_2 < \cdots < i_s$, define  recursively the sets $A_1:= e_1 \backslash U$ and for $j=1,\dots,s$, if $(e_{i_j}\backslash U) \cap A_j = \emptyset$, take $A_{j+1} := A_{j} \cup (e_{i_j}\backslash U)$, otherwise take $A_{j+1} := A_{j} \cup (e_{i_j}\backslash U) \cup \{v_{i_j-1}\}$, let $A :=A_{s+1}$. 
Note that $\abs{A_j}<\abs{A_{j+1}}$, for all $j\in \{1,2,\dots, s\}$, so $\abs{A} \geq |A_1| + s \geq r-1$,  by Lemmas \ref{first hyperedge}, \ref{missing} and \ref{extra},  we  have that $N_h(A) \subseteq \F$.
If $m = k-1$ then $A$ is a set of at least $r-1$ vertices incident with at most $m$ hyperedges, hence Lemma \ref{main} holds.  If $\abs{A} \geq r$  then $A$ is a set of at least $r$ vertices incident with at most $k-1$ hyperedges, hence Lemma \ref{main} holds. From here we may assume $m < k-1$ and $\abs{A} = r-1$. Observe that $\abs{A} = r-1$ is only possible if for every $i=1,2,\dots,s$, $\abs{A_{i+1}}=\abs{A_{i}}+1$. We will assume, without loss of generality, that among all possible semi-paths of maximal length, $P$ is a one for which  $\abs{e_1 \backslash U}$ is minimal. There are two cases:

\begin{itemize}

\item[Case 1:] There exists an index $j \geq d$, such that $A_j$ intersects $(e_{i_j}\backslash U)$, let $j'$ be the first such index, then there is another index $d-1 \leq q< j'$ such that $ (e_{i_j}\backslash U) \cap (e_{i_{q}}\backslash U) \not= \emptyset$, and let $u$ be an element in the intersection. 

If $i_{q} < i_{j'}-1$ then $v_{i_{q}} \not \in A$ from minimality of $j'$, and by Lemma \ref{extra}, $N_h(v_{i_{q}}) \subseteq \F$ so $A \cup \{v_{i_{q}}\}$ is a set of vertices, of size $r$, incident with at most $k-1$ hyperedges, hence Lemma \ref{main} holds. 

If $d < i_{q} = i_{j'}-1 $ then by applying Lemma \ref{first hyperedge} to a maximal semi-path \\
\begin{displaymath}
e_{i_{q}-1},v_{i_{q}-2},e_{i_{q}-2},v_{i_{q}-3},\dots,v_2,e_2,v_1,e_1,v_{i_{q}},e_{i_{q}},u,e_{i_{q}+1},v_{i_{q}+1},e_{i_{q}+2},\dots,e_t,v_t
\end{displaymath}
we get $N_h(v_{i_{q}-1}) \subseteq \F$, since $v_{i_{q}-1}$ is a non defining vertex in the first hyperedge. Also we have  $v_{i_{q}-1} \not \in A$ from mentality of $j'$, hence $A \cup \{v_{i_{q}-1}\}$ is a set of  $r$ vertices, incident with at most $k-1$ hyperedges and therefore \ref{main} holds. 

If $d=i_{q}=i_{{j'}}-1$, note that this implies that $(e_{d+1}\backslash U) \subseteq (e_1\backslash U),$  otherwise $A_{d+1}$ would have at least two new elements, but by minimality of $\abs{e_1\backslash U}$, we have   $(e_{d+1}\backslash U) = (e_1\backslash U)$. Fix any vertex $v_x$, $v_x \in U \cap (e_{d+1}\backslash e_1)$. We need a similar lemma as Lemma \ref{extra}.

\begin{claim}\label{special}
Suppose $v_{j} \in e_1$ is such that $(e_j\backslash U)$ intersects $(e_x\backslash U)$ then $N_{h}(v_{\max\{j,x\}-1}) \subseteq F.$
\end{claim}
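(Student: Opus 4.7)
The plan is to mimic the bridge-rearrangement technique from Lemma~\ref{extra}. The two usable bridges are $v_j \in e_1$ (from the hypothesis) and any $u \in (e_j \setminus U) \cap (e_x \setminus U)$ (a vertex chosen from the hypothesized intersection); I will splice $P$ along these bridges into a new maximum-length semi-path $P^\ast$ whose first hyperedge is $e_{\max\{j,x\}-1}$ and for which $v_{\max\{j,x\}-1}$ does not appear as a vertex. Applying Lemma~\ref{first hyperedge} to $P^\ast$ will then give the conclusion.

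Without loss of generality suppose $j < x$ (the case $j > x$ is handled analogously, starting the rearrangement at $e_{j-1}$ and routing through the same two bridges to displace $v_{j-1}$). Define
\begin{align*}
P^\ast := \; & e_{x-1}, v_{x-2}, e_{x-2}, v_{x-3}, \ldots, v_{j+1}, e_{j+1}, v_j, \\
& e_1, v_1, e_2, v_2, \ldots, v_{j-1}, e_j, u, e_x, v_x, e_{x+1}, v_{x+1}, \ldots, e_t, v_t.
\end{align*}
The only non-obvious consecutive incidences are the two bridge steps $v_j \in e_1$ and $u \in e_j \cap e_x$, and the listed vertices are exactly $(\{v_1, \ldots, v_t\} \setminus \{v_{x-1}\}) \cup \{u\}$, which are pairwise distinct: $u \notin U$ by choice, and any coincidence $u = v_i$ with $i \geq k$ could be spliced into a Berge cycle of length at least $k$ in precisely the style used in the proof of Lemma~\ref{first hyperedge}, contradicting $\BC_{\geq k}$-freeness. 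Since $P^\ast$ uses every hyperedge of $P$, its length is $t$, so $P^\ast$ is itself a semi-path of maximal length.

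Because $v_j, v_x \in U$ we have $j, x \leq k-1$, so the first $x-1$ hyperedges of $P^\ast$ form the set $\{e_1, e_2, \ldots, e_{x-1}\}$, and continuing through position $k-1$ the first $k-1$ hyperedges of $P^\ast$ coincide with $\F = \{e_1, \ldots, e_{k-1}\}$. The vertex $v_{x-1}$ lies in $e_{x-1}$, the first hyperedge of $P^\ast$, but does not appear anywhere as a vertex of $P^\ast$; in particular it is not among the first $k-1$ vertices of $P^\ast$. Applying Lemma~\ref{first hyperedge} to $P^\ast$ therefore yields $N_h(v_{x-1}) \subseteq \F$, as required.

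The main obstacle is the bookkeeping in verifying that $P^\ast$ is genuinely a maximal-length semi-path, in particular the distinctness of $u$ from every $v_i$, which leans on the $\BC_{\geq k}$-free hypothesis in exactly the same manner as Lemma~\ref{extra}, together with the verification that in the symmetric case $j > x$ one can analogously arrange a rearrangement starting at $e_{j-1}$ so that $v_{j-1}$ is displaced and the first $k-1$ hyperedges again form $\F$.
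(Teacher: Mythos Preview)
Your treatment of the case $j<x$ is correct and is exactly the rearrangement the paper has in mind (the paper simply says the argument is ``similar to the proof of Lemma~\ref{extra}'' and gives no details). The semi-path $P^\ast$ you wrote down has length $t$, its first $\ell$ hyperedges are precisely $\F$, and $v_{x-1}$ lies in its first hyperedge without being a defining vertex, so Lemma~\ref{first hyperedge} applies.

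The gap is in the case $j>x$. Your sentence ``handled analogously \ldots\ routing through the same two bridges'' does not go through, because the situation is \emph{not} symmetric in $j$ and $x$: the hypothesis gives $v_j\in e_1$ but says nothing about $v_x\in e_1$, and in fact in the surrounding context $v_x$ was chosen in $e_{d+1}\setminus e_1$, so $v_x\notin e_1$. If you try to swap the roles of $j$ and $x$ in your $P^\ast$, the step ``$v_x,\,e_1$'' fails. One can check directly that with only the incidences $v_j\in e_1$ and $u\in e_j\cap e_x$ (plus the path incidences), there is no rearrangement of $e_1,\ldots,e_t$ into a length-$t$ semi-path starting at $e_{j-1}$ that omits $v_{j-1}$.

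What rescues the case $j>x$ is extra structure available from the context in which Claim~\ref{special} is stated: we are in the sub-case $i_{j'}=d+1$, so $v_{d+1}\in e_1$, and $v_x$ was chosen in $e_{d+1}$. These give two further bridges, and with them one can write
\begin{align*}
P^{\ast\ast}=\;& e_{j-1},v_{j-2},\ldots,e_{x+1},v_x,\;e_{d+1},v_d,e_d,\ldots,e_2,v_1,\;e_1,v_{d+1},e_{d+2},\ldots,v_{x-1},\\
& e_x,u,e_j,v_j,e_{j+1},\ldots,e_t,v_t,
\end{align*}
a maximal-length semi-path whose first $\ell$ hyperedges are again $\F$ and in which $v_{j-1}\in e_{j-1}$ is not a defining vertex; Lemma~\ref{first hyperedge} then gives $N_h(v_{j-1})\subseteq\F$. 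So the claim is true, and the overall strategy is the one the paper intends, but the $j>x$ case needs these additional bridges rather than ``the same two''.
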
 
We  skip the proof of Lemma \ref{special}, since it is similar to the proof of Lemma \ref{extra}. 
%
Let $(e_1 \cap U) \cup \{v_x\} = \{v_{j_0},v_{j_1},\dots,v_{j_{s+1}}\}$, where $1=j_0 < j_1 < \cdots < j_{s+1}$,   define recursively the following sets $B_1 = e_1 \backslash U,$ and for $c=1,2,\dots,s+1$ let $B_{c+1} = B_{c} \cup (e_{v_{j_c}}\backslash U)$, if $B_{c} \cap (e_{v_{j_c}}\backslash U) = \emptyset$, otherwise take $B_{c+1} = B_c \cup \{v_{j_c}\}.$ Finally  $B_{s+2}$ has size at least $r$ and  is incident with at most $k-1$ hyperedges, therefore Lemma \ref{main} holds.
    
    \item[Case 2:] For every index $j \geq d$, $A_j$ and $(e_{i_j}\backslash U)$ are disjoint. We have that $r-1 = \abs{A} = \abs{e_1\backslash U} + (d-1) + \abs{(e_{i_d}\backslash U)} + \cdots + \abs{(e_{i_s}\backslash U)},$
    this implies that $\abs{(e_{i_j}\backslash U)} = 1$ for every $j$, hence $ \abs{U} \geq r-1$, but since $k-1 \geq \abs{U}$, we have that $k = r$ and $\abs{U}=r-1$. 
   So there exists distinct vertices $u_d,u_{d+1},\dots,u_{r-1}$ such that $e_i = U \cup\{u_i\}$ for each  $i\in \{d,d+1,\dots, r-1\}$ and $A = \{v_1,v_2,\dots,v_{d-1},u_d,u_{d+1},\dots,u_{r-1}\}$. If $d>1$, take a maximal semi-path, which  we get by exchanging $v_{r-2}$ with $v_1$ and $v_1$ with $u_d$, in a semi-path $P$ and apply Lemma \ref{first hyperedge}, we get $N_h(v_{r-2}) \subseteq \F$. Therefore $A\cup\{v_{r-2}\}$ is a set of vertices of size $r$ incident with at most $k-1$ hyperedges, therefore Lemma \ref{main} holds. We may assume $d= 1$, and then each $u_i$ is a degree one vertex. 
We may assume that the length of $P$ is at least $r$, otherwise $N_h(v_{r-1})\subset \F$, hence $A\cup \{v_{r-1}\}$ is a vertex set of size $r$ incident with at most $k-1$ hyperedges, therefore Lemma \ref{main} holds. 
    \begin{claim}
    \label{U}
    If there exists a hyperedge $e$, $e \not = e_{r}$ and $e \cap (U\backslash\{v_{r-1}\})\not=\emptyset$ then the vertices in  $e \backslash U$ are only incident with $e$.
    \end{claim}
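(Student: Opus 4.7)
The plan is to argue by contradiction: suppose some vertex $w \in e \setminus U$ belongs to another hyperedge $e^* \neq e$, and derive either a Berge cycle of length at least $r = k$ or a semi-path strictly longer than the maximal $P$. Observe first that $e$ is not any book edge $e_j$ with $1 \leq j \leq r-1$: otherwise $e \setminus U$ would equal $\{u_j\}$, forcing $w = u_j$ and contradicting the fact (established just before the claim) that each $u_j$ has degree one. Fix $v_i \in e \cap (U \setminus \{v_{r-1}\})$, so $1 \leq i \leq r-2$. The book structure is the key tool: any $r-2$ of the $r-1$ book edges can be used to traverse all of $U$ from any prescribed starting vertex to any prescribed ending vertex via the $r-3$ intermediate elements of $U$ in any order.

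Case A: some edge $f \neq e$ containing $w$ meets $U \setminus \{v_i\}$, say at $v_j$. The same degree-one argument rules out $f$ being a book edge, so
\[
w,\, f,\, v_j,\, e_{\alpha_1},\, v_{c_1},\, e_{\alpha_2},\, \ldots,\, e_{\alpha_{r-2}},\, v_i,\, e,\, w
\]
(with $e_{\alpha_1},\ldots,e_{\alpha_{r-2}}$ any $r-2$ distinct book edges and $v_{c_1},\ldots,v_{c_{r-3}}$ an ordering of $U \setminus \{v_i,v_j\}$) is a Berge cycle of length $r$, contradicting $\BC_{\geq k}$-freeness.

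Case B: every edge through $w$ other than $e$ meets $U$ inside $\{v_i\}$. Since $v_{r-1} \in e_r$ and $v_{r-1} \neq v_i$ we get $w \notin e_r$; the book reason again gives $w \notin e_\ell$ for $\ell \leq r-1$. Subcase B.1: $w \in e_j$ for some $j > r$. Choose $j$ minimal; then
\[
w,\, e,\, v_i,\, e_{\alpha_1},\, v_{c_1},\, \ldots,\, e_{\alpha_{r-2}},\, v_{r-1},\, e_r,\, v_r,\, e_{r+1},\, \ldots,\, e_j,\, w
\]
is a Berge cycle of length $j \geq r+1$. Subcase B.2: $w$ lies in no $e_\ell$ with $\ell \geq r$; then $e,e^* \notin \{e_1,\ldots,e_t\}$, and
\[
P' := e^*,\, w,\, e,\, v_i,\, e_{\alpha_1},\, v_{c_1},\, \ldots,\, e_{\alpha_{r-2}},\, v_{r-1},\, e_r,\, v_r,\, \ldots,\, e_t,\, v_t
\]
is a semi-path of length $t+1$, contradicting the maximality of $P$.

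The main obstacle will be verifying that the vertices and hyperedges in each construction are genuinely distinct. The delicate point is that $w$ could \emph{a priori} coincide with some defining vertex $v_\ell$ in the tail of $P$, or $e$ itself could already be an edge $e_{j'}$ of $P$ with $j' > r$. In Subcase B.1 the minimality of $j$ handles the first issue (an equality $w = v_\ell$ with $\ell < j$ would place $w$ in $e_\ell$, violating minimality) and forces $e \neq e_j$; if $e = e_{j'}$ with $j' > j$, it simply does not occur in the constructed cycle. In Subcase B.2 the hypothesis $w \notin e_\ell$ for all $\ell \geq r$ simultaneously rules out $e,e^* \in P$ and $w \in \{v_r,\ldots,v_t\}$, making $P'$ a bona fide longer semi-path.
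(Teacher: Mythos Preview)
Your argument is essentially correct and close in spirit to the paper's, but there is one unjustified assertion in Subcase~B.1: you claim that the minimality of $j$ ``forces $e \neq e_j$'', yet nothing rules this out. If $e$ itself is a path edge $e_{j_0}$ with $j_0 > r$, then $j_0$ is among the indices with $w \in e_{j_0}$, and the minimal such index $j$ may very well equal $j_0$; in that case your displayed cycle uses $e = e_j$ twice and is not a Berge cycle. The repair is immediate: when $e = e_j$ you have $v_i \in e_j$, so drop the initial ``$w,\,e$'' and close up as
\[
v_i,\; e_{\alpha_1},\; v_{c_1},\; \ldots,\; e_{\alpha_{r-2}},\; v_{r-1},\; e_r,\; v_r,\; \ldots,\; e_j,\; v_i,
\]
a Berge cycle of length $j-1 \geq r$.

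The paper organises the proof a bit differently. After rearranging the book so that $v_1 \in e$, it first disposes of the case that $e$ is a path edge of index larger than $r$ (then $v_1 \in e_j$ with $j > r$ immediately gives the cycle $v_1, e_2, v_2, \ldots, e_j, v_1$ of length $j-1 \geq r$). Once $e \notin P$ is known, it replaces $e_1$ by $e$ to obtain another maximal semi-path with the same set $U$ of first $r-1$ vertices, and invokes Lemma~\ref{first hyperedge} on that new path: any $v \in e \setminus U$ can then lie only in $\{e, e_2, \ldots, e_{r-1}\}$, and membership in any of the latter produces a length-$r$ cycle through $e, e_1, \ldots, e_{r-1}$ and the vertices $v, v_1, \ldots, v_{r-1}$. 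So the paper reuses its key lemma as a black box, whereas you unfold the underlying ``longer semi-path'' and ``cycle through a later path edge'' arguments by hand in your Case~B. Both routes work once the small gap above is patched.
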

    \begin{proof}
Suppose without loss of generality $v_1 \in e$, otherwise we can rearrange path. If $e$ is a hyperedge of semi-path $P$, then $e = e_j$  for some $j<r$, otherwise we have a  Berge cycle length at least $k$, a contradiction.
If $e=e_j$, $j<r$ then we already deduced that Claim \ref{U} holds. If $e$ is not a defining hyperedge of semi-path $P$, then consider $P'$ obtain by replacing $e_1$ in $P$ with $e$, from Lemma \ref{first hyperedge},  a vertex in $v \in e\backslash U$  can only be incident with $e,e_2,e_3,\dots,e_{r-1}$, but if $v$ is incident with one of this hyperedges from $e_2,e_3,\dots,e_{r-1}$  then  $e,e_1,e_2,\dots,e_{r-1}$ together with the vertices $v,v_1,v_2,\dots,v_{r-1}$ in some order would be a Berge cycle of length $r$, a contradiction. Finally we have $N_h(e\backslash U)= \{e\}$. \end{proof}
Let $h_1,h_2, \dots,h_p$ be the hyperedges incident with $U\backslash{v_{r-1}}$.
If $\abs{h_i\backslash U} \geq 2$, for some $i$, then $(e_1\cup e_2 \cup \cdots \cup e_{r-2}\cup h_i) \backslash U$ is a set of size at least $r$ incident with $k-1$ hyperedges, hence Lemma \ref{main} holds. Otherwise we have $\abs{h_i\backslash U} = 1$, so this hyperedges form a $r$-star $\S$ with $p\geq r-1$ hyperedges. Every  hyperedge  from $E(\h)\backslash \{e_r, h_1, h_2,\dots,h_p\}$ can only intersect $V(\S)$ in $v_{r-1}$, by setting $\K$ the $r$-graph induce from $\h$ by the vertices  $\{v_{r-1}\}\cup (V(H)\backslash V(S))$ we get a desired partition, therefore Lemma \ref{main} holds. 

\end{itemize}


\section*{Acknowledgment}

The research of first and third authors was partially supported by the National Research, Development and Innovation Office NKFIH, grants  K116769, K117879 and K126853. The research of the third author is partially supported by  Shota Rustaveli National Science Foundation of Georgia SRNSFG, grant number  FR-18-2499.


\begin{thebibliography}{1}

\bibitem{DavoodiGMT}
A. Davoodi, E. Gy\H ori, A. Methuku, C. Tompkins. An Erd\H{o}s-Gallai type theorem for uniform hypergraphs. {\it European Journal of Combinatorics} \textbf{69} (2018): 159--162.

\bibitem{jackson1981cycles}
B. Jackson. Cycles in bipartite graphs. {\it Journal of Combinatorial Theory, Series B} \textbf{30}(3) (1981): 332--342


 \bibitem{Er-Ga} P. Erd\H os,  T. Gallai.  On maximal paths and
 circuits of graphs. {\it Acta Math. Acad. Sci. Hungar.}  \textbf{10} (1959):
 337--356.

\bibitem{ergemlidze2018avoiding}
B. Ergemlidze, E. Gy{\H{o}}ri, A. Methuku, N. Salia, C. Tompkins and O. Zamora. Avoiding long Berge cycles, the missing cases $ k= r+ 1$ and $ k= r+ 2$ {\it arXiv preprint } arXiv:1808.07687 (2018)
 
 \bibitem{furedi2018avoiding}
Z. F\"uredi, A. Kostochka, R. Luo. Avoiding long Berge cycles. \textit{arXiv preprint} arXiv:1805.04195 (2018).

 \bibitem{furedi2018avoiding2}
Z. F\"uredi, A. Kostochka, R. Luo.  Avoiding long Berge cycles II, exact bounds for all $n$. \textit{arXiv preprint} arXiv:1807.06119 (2018).

 \bibitem{GyoKaLe} E. Gy\H ori, G. Y. Katona, N. Lemons.
 Hypergraph extensions of the Erd\H os-Gallai
 Theorem. {\it European Journal of Combinatorics} \textbf{58} (2016) 238--246.

\bibitem{GyoriMSTV}
E. Gy\H ori, A. Methuku, N. Salia, C. Tompkins, M. Vizer. On the maximum size of connected hypergraphs without a path of given length. 
{\it Discrete Mathematics} \textbf{341(9)} (2018): 2602-–2605



 \bibitem{KostochkaLuo}
A. Kostochka, and R. Luo. On $r$-uniform hypergraphs with circumference less than $r$. {\it arXiv preprint} arXiv:1807.04683 (2018).




\end{thebibliography}
\end{document}